\newtheorem{theorem}{Theorem}[section]
\newtheorem{lemma}[theorem]{Lemma}
\newtheorem{proof of lemma}[theorem]{Proof of Lemma}
\newtheorem{proposition}[theorem]{Proposition}
\newtheorem{corollary}[theorem]{Corollary}
\theoremstyle{definition}
\newtheorem{definition}[theorem]{Definition}
\newtheorem{remark}[theorem]{Remark}
\numberwithin{equation}{section}
\begin{document}
\title[Pointwise convergence for the heat equation on $\mathbb T^n$ and $\mathbb T^n \times \mathbb R^m$]{Pointwise convergence for the heat equation on tori  $\mathbb T^n$ and waveguide manifold $\mathbb T^n \times \mathbb R^m$}

\author{Divyang G. Bhimani and Rupak K. Dalai}

\address{Department of Mathematics, Indian Institute of Science Education and Research-Pune, Homi Bhabha Road, Pune 411008, India}

\email{divyang.bhimani@iiserpune.ac.in, rupakinmath@gmail.com}

\subjclass[2010]{Primary 42B25, 35K08, 40A10 ; Secondary 35C15, 35G10}

\date{}

\keywords{Heat kernel, Pointwise convergence, maximal operator, weighted inequalities, waveguide manifold.}

\maketitle

\begin{abstract} 
 We completely characterize the  weighted Lebesgue spaces on the
torus $\mathbb T^n$ and waveguide manifold $\mathbb T^n \times \mathbb R^m$ for which the solutions of the heat equation converge pointwise (as time tends to zero) to the initial data.   In the process, we also characterize the weighted Lebesgue spaces for the boundedness of maximal operators on the torus and waveguide manifold, which may be of independent interest.
\end{abstract}

\tableofcontents

\section{Introduction}
\subsection{Heat equation on tori $\mathbb T^n$}We consider the  heat equation on the torus
\begin{equation}\label{ht}
\begin{cases} 
{ \frac { \partial u } { \partial t } ( x , t ) = \Delta _ { x } u ( x , t ) } \\
{ u ( x , 0 ) = f ( x ) }
\end{cases} (x, t)\in \mathbb T^n \times\mathbb R^+.
\end{equation}
Formally,   the  solution of \eqref{ht}   is given by
\[u(x, t)= \mathcal{F}_T^{-1}\left(e^{-t|l|^2} \mathcal{F}_Tf(l)\right)(x)=\phi_t\ast f(x) \quad  (l \in \mathbb Z^n,  x \in \mathbb T^n),\]
where $\mathcal{F}_{T}$ and $\mathcal{F}^{-1}_T$ are the toroidal Fourier  and inverse Fourier transforms respectively (see Section \ref{pre})   and $\ast$ denotes convolution on $\mathbb T^n.$  In view of  Poisson summation formula,
we may rewrite the heat kernel  as follows
\begin{equation}\label{ehk}
\phi_t(x)= \mathcal{F}_T^{-1} (e^{-t|l|^2})(x)= \sum_{l \in \mathbb{Z}^n} e^{i l \cdot x - |l|^2 t}= (4\pi t)^{-\frac{n}{2}} \sum_{k \in \mathbb{Z}^n} e^{-\frac{|x+k|^2}{4 t}}. 
\end{equation}
Given initial data  $f\in L^p(\mathbb T^n) \  (1\leq p \leq \infty),$ it is known that the solution   $u(x,t)$ of \eqref{ht} converges  pointwise  to $f(x)$  for almost all $x$ as $t$ tends to $0$ (see e.g. Theorem $8.15$ in \cite{raFolland}). Specifically,  for $f\in L^p(\mathbb T^n),$ the following limit holds
\begin{equation}\label{limpt}
\quad \lim _{t \rightarrow 0^+} u(x, t)=f(x) \mbox{ for almost every} \  x.
\end{equation} Let $v$  be a  strictly  positive weight on the torus $\mathbb T^n.$  Define weighted Lebesgue space for $1\leq p<\infty$ by 
\[L^p_v(\mathbb T^n)= \left\{ f: \mathbb T^n \to \mathbb C\left|\|f\|_{L^p_v(\mathbb T^n)}= \|f v^{\frac{1}{p}} \|_{L^p(\mathbb T^n)}< \infty\right. \right\}. \]
\begin{definition}\label{wct}  Let $v$ be a strictly positive weight on the torus $\mathbb{T}^n$,  $1\leq p<\infty$ and $\frac{1}{p}+\frac{1}{p'}=1.$
	We define that the weight $v$ belongs to the class $D^{T}_p$  if there exists $t_0>0$ such that
	\begin{equation}\label{eq16}
		D_p^{T}:=\left\{v: v^{-\frac{1}{p}}\phi_{t_0} \in L^{p^{\prime}}(\mathbb T^n)\right\}.
	\end{equation}
\end{definition}
It is now natural to ask if we can characterize weighted  Lebesgue  spaces $L^p_v(\mathbb T^n)$ so that the limit \eqref{limpt}
holds?  We answer this question by establishing the  following theorem.
\begin{theorem}\label{th1}
Let $v$  be a  strictly  positive weight on the torus $\mathbb T^n$ and  $f\in L^p_v(\mathbb T^n).$ Suppose $0<T< \frac{1}{2}.$ Then $v\in D^{T}_p$ if and only if $u(x, t)$ is a solution of (\ref{ht}) for $t\in (0,T],$ and \[\lim _{t \rightarrow 0^+} u(x,t)=f(x) \text{ for almost every }  x. \] 
\end{theorem}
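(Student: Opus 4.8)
The plan is to prove the two implications separately. For the ``if'' direction, suppose the pointwise limit \eqref{limpt} holds for every $f\in L^p_v(\mathbb T^n)$ and that $u(x,t)$ is genuinely a solution for $t\in(0,T]$. The key observation is that $u(x,t)=\phi_t\ast f(x)$ makes sense only if $\phi_t$ pairs with $f$; since $f=gv^{-1/p}$ ranges over all of $L^p(\mathbb T^n)$ as $g$ ranges over $L^p(\mathbb T^n)$, requiring $\phi_{t_0}\ast f$ to be finite (even just well-defined as an absolutely convergent integral) for all such $f$ forces, by duality on $L^p(\mathbb T^n)$, that $v^{-1/p}\phi_{t_0}\in L^{p'}(\mathbb T^n)$, i.e. $v\in D_p^T$. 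One has to be slightly careful to extract this from "solution exists / limit holds a.e." rather than from an explicit quantitative bound; the standard move is a closed-graph / uniform-boundedness argument: the a.e.\ convergence for all $f\in L^p_v$ together with completeness of $L^p_v$ yields, via Banach's principle (Stein), an a priori weak-type bound for the maximal operator $u^*f(x)=\sup_{0<t\le T}|\phi_t\ast f(x)|$, and in particular finiteness of $\phi_{t_0}\ast f(x)$ for a.e.\ $x$ and all $f$, which is what feeds the duality argument.

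For the ``only if'' direction, assume $v\in D_p^T$, so $v^{-1/p}\phi_{t_0}\in L^{p'}(\mathbb T^n)$ for some $t_0>0$. First I would check that $u(x,t)=\phi_t\ast f(x)$ is well-defined pointwise for every $x$ and every $0<t\le T$: writing $|\phi_t\ast f(x)|\le \int |\phi_t(x-y)||f(y)|\,dy = \int |\phi_t(x-y)|v(y)^{-1/p}\,|f(y)|v(y)^{1/p}\,dy$ and applying Hölder with exponents $p',p$, it suffices to bound $\|\phi_t(x-\cdot)v^{-1/p}\|_{p'}$; using the semigroup/monotonicity structure of the heat kernel on $\mathbb T^n$ (the explicit formula \eqref{ehk} shows $\phi_t$ is, up to harmless constants on the range $0<t\le T<\tfrac12$, pointwise comparable to or dominated by a fixed multiple of $\phi_{t_0}$ after using $\phi_{t+s}=\phi_t\ast\phi_s$ and positivity), one reduces $\|\phi_t(x-\cdot)v^{-1/p}\|_{p'}$ to a finite quantity controlled by $\|\phi_{t_0}v^{-1/p}\|_{p'}=\|f\|$-independent constant. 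This simultaneously shows the maximal function $u^*f$ is bounded on $L^p_v(\mathbb T^n)$ — indeed, this is the place to invoke the maximal operator characterization that the abstract says is proved earlier in the paper.

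Granting boundedness of the maximal operator $u^*\colon L^p_v(\mathbb T^n)\to L^p_v(\mathbb T^n)$ (or its weak-type analogue), the pointwise convergence then follows by the classical density argument: for trigonometric polynomials $f$ (which are dense in $L^p_v(\mathbb T^n)$ because $v$ is strictly positive, hence $L^\infty\subset L^p_v$ and smooth functions are dense), $\phi_t\ast f\to f$ uniformly as $t\to0^+$, since $\widehat{\phi_t}(l)=e^{-t|l|^2}\to1$ on the finite frequency support; then for general $f\in L^p_v$, split $f=P+(f-P)$ with $P$ a polynomial close to $f$ in $L^p_v$, and control $\limsup_{t\to0^+}|\phi_t\ast(f-P)(x)-(f-P)(x)|\le u^*(f-P)(x)+|(f-P)(x)|$, whose $L^p_v$-norm (or distribution function) is small; letting $P\to f$ kills the exceptional set. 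The main obstacle is the careful comparison of $\phi_t$ for varying $t\in(0,T]$ against the single fixed $\phi_{t_0}$ appearing in the definition of $D_p^T$ — one must exploit the explicit Gaussian sum in \eqref{ehk} together with the constraint $T<\tfrac12$ to get clean pointwise domination of $|\phi_t(x-y)|$ by a constant times $\phi_{t_0}(x-y)$ (or by a sum of translates of a Gaussian that is itself $L^{p'}(v^{-1})$-integrable), uniformly in $t$; the smallness of $T$ is exactly what prevents the periodization from spoiling this bound.
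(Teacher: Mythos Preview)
Your plan for the ``if'' direction (a.e.\ convergence for all $f\in L^p_v$ $\Rightarrow$ $v\in D_p^T$) is on the right track and close to the paper's route via Proposition~\ref{prop1}: finiteness of $\phi_{t_0}*f$ a.e.\ for all $f\in L^p_v$ forces, after a pointwise propagation step and duality, $v^{-1/p}\phi_{t_0}\in L^{p'}$. (The paper does not invoke Banach's principle; the hypothesis ``$u(x,t)$ is a solution on $(0,T]$'' already gives $\phi_T*f$ finite a.e., which is what feeds the duality.)

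The ``only if'' direction contains a genuine gap. Your central claim---that one obtains ``clean pointwise domination of $|\phi_t(x-y)|$ by a constant times $\phi_{t_0}(x-y)$ \dots\ uniformly in $t$''---is false: from \eqref{ehk} one has $\phi_t(0)\ge (4\pi t)^{-n/2}\to\infty$ as $t\to 0^+$, while $\phi_{t_0}(0)$ is fixed. No semigroup identity and no use of the constraint $T<\tfrac12$ repairs this, because the kernel concentrates at the origin. Hence the route $H_R^*f\le C\,\phi_{t_0}*|f|$ is blocked, and with it your proposed proof that the maximal operator is bounded.

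The paper's actual mechanism (Theorem~\ref{th2}) is different. One splits $\phi_t=\phi_t^1+\phi_t^2$ with $\phi_t^1=\phi_t\chi_{\{|x|\le(2nR)^{1/2}\}}$. The tail $\phi_t^2$ \emph{is} monotone increasing in $t$ on $(0,R)$ (this is where the support cut and the smallness of $R$ matter), so $\sup_{t<R}\phi_t^2*|f|\le\phi_R*|f|$, handled by Proposition~\ref{prop1}. The local part $\phi_t^1$, however, is \emph{not} dominated by any fixed $\phi_{t_0}$; instead a dyadic decomposition in $t$ together with the upper bound in Theorem~\ref{crb} yields $\sup_{t<R}\phi_t^1*|f|\lesssim \mathcal M^T f$, the Hardy--Littlewood maximal function on the torus. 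Its boundedness from $L^p_v$ into $L^p_u$ for \emph{some other} weight $u$ (Proposition~\ref{GP}) is the missing ingredient in your outline. In particular the resulting maximal bound is $L^p_v\to L^p_u$, not $L^p_v\to L^p_v$ as you wrote; the density argument then runs against the $u$-weighted measure rather than the $v$-weighted one.
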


In recent years, there has been a great deal of interest to study the solutions to a given Cauchy problem converge pointwise almost everywhere to the initial data,  see \cite{vivianiPAMS, stingaPA, torreaTAMS, CardosoARXIV, BetancorARXIV, BrunoARXIV, Flores2023}. Hartzstein-Torrea-Viviani \cite{vivianiPAMS} have  characterized the weighted Lebesgue spaces  on $\mathbb R^m$   for which the solutions of the heat and Poisson  equations have limits a.e.  as  $t\to 0$.   In \cite{stingaPA}, Abu-Falahah-Stinga-Torrea  have addressed similar question for   heat-diffusion problems associated to the harmonic oscillator and the Ornstein-Uhlenbeck operator.  Later  in \cite{torreaTAMS},   Garrig\'os, Hartzstein, Signes, Torrea and Viviani  have studied  similar problem for heat and Poisson in the presence of harmonic potentials.  Recently  Cardoso \cite{CardosoARXIV}, Bruno-Papageorgiou \cite{BrunoARXIV}, and Romero-Barrios-Betancor \cite{BetancorARXIV} have studied similar problem  on the Heisenberg group, symmetric spaces, and homogeneous trees, respectively. However,  these works did not consider the torus case, 
and thus Theorem \ref{th1} is new. 

\subsection{Heat equation on waveguide manifold $\mathbb T^n \times \mathbb R^m$}
We consider the heat equation on the waveguide manifold
\begin{equation}\label{eq-r11}
\begin{cases} \frac{\partial u}{\partial t}(x, y, t)=\Delta_{WG} \,u(x, y, t)\\
u(x, y, 0)=f(x, y)
\end{cases}(x, y, t) \in \mathbb{T}^n \times\mathbb{R}^m\times\mathbb R^+.
\end{equation}
Here,  $\Delta_{W G}=\Delta_{\mathbb T^{n}}+\Delta_{\mathbb{R}^{m}}$, where $\Delta_{\mathbb T^{n}}$ represents the Laplace-Beltrami operator on $\mathbb T^{n}$ and $\Delta_{\mathbb{R}^{m}}$  Laplacian  on  $\mathbb{R}^{m}$.  
 The product space  $\mathbb T^n \times \mathbb R^m$ is known as semi-periodic space or waveguide manifold.  $d = n+m$  is the whole dimension while $n$ is the dimension for the tori component and $m$ is the dimension for the Euclidean component.   Semiperiodicity, which is a partial confinement that forces the wave function to periodically move along certain directions, is one of the most intriguing features of model \eqref{eq-r11}. This particularly leads to some unexpected new challenges for mathematical analysis.  In today’s backbone networks, data signals are almost exclusively transmitted by optical carriers in fibers. Applications like the internet demand an increase of the available bandwidth in the network and reduction of costs for the transmission of data.  Waveguide manifolds  are of particular interest in nonlinear optics of telecommunications,  see  \cite{schneider2013nonlinear, snyder1983optical}. 

\smallskip

Formally,  the  solution of  (\ref{eq-r11}) is expressed as follows
$$
\begin{aligned}
	u(x, y, t) & =e^{-t \Delta_{W G}} f(x, y) \\
	& =\sum_{k \in \mathbb Z^{n}} \int_{\mathbb{R}^{m}}e^{i(k\cdot x+\xi \cdot y)} e^{-t\left(|k|^{2}+|\xi|^{2}\right)} \mathcal{F}_{W G} f(k,\xi)\,d\xi \\
	& =\mathcal{F}_{W G}^{-1}\left(e^{-t\left(|k|^{2}+|\xi|^{2}\right)} \mathcal{F}_{W G} f(k,\xi)\right)(x, y) \\
	& =\phi_{t}^{W G} * f(x, y),
\end{aligned}
$$
where $\ast$ denote  convolution on $\mathbb T^n \times \mathbb R^m$ and $\mathcal{F}_{WG}$ and $\mathcal{F}^{-1}_{WG}$ are the waveguide Fourier  and inverse Fourier transforms respectively (see Section \ref{pre}).
\begin{definition}\label{def-r1}  Let $v$ be a strictly positive weight on the waveguide $\mathbb{T}^n \times\mathbb{R}^m$.
We define that the weight $v$ belongs to the class $D^{W G}_p$ for $1\leq p<\infty$ if there exists $t_0>0$ such that
$$
\int_{\mathbb{T}^n \times\mathbb{R}^m}\left|v^{-\frac{1}{p}}(x, y) e^{-\frac{|y|^{2}}{4 t_{0}}} \right|^{p'}d x \,d y<\infty .
$$
\end{definition}

\begin{theorem}\label{th-r1}
	Let $v$ be a strictly positive weight on the waveguide $\mathbb{T}^n \times\mathbb{R}^m$ and $f\in L^p_{v}\left(\mathbb{T}^n \times\mathbb{R}^m\right).$ Suppose $0<T<\frac{1}{2}.$ Then $v\in D^{W G}_p$ if and only if $u(x, y, t)$ is a solution of (\ref{eq-r11}) for $t\in(0,T]$, and
	$$
	\lim _{t \rightarrow 0^+} u(x, y, t)=f(x, y) \text { for almost every } (x, y).
	$$
\end{theorem}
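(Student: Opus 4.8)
The plan is to run the two--part scheme behind Theorem \ref{th1}, exploiting the tensor structure of the heat kernel. From \eqref{ehk},
\[
\phi^{WG}_t(x,y)=\phi_t(x)\,g_t(y),\qquad g_t(y):=(4\pi t)^{-m/2}e^{-|y|^2/4t},
\]
so that the $\mathbb T^n$--direction is carried by the torus kernel $\phi_t$ (living on the compact torus) and the $\mathbb R^m$--direction by the Euclidean Gaussian $g_t$; this makes available both the torus analysis underlying Theorem \ref{th1} and the Euclidean analysis of \cite{vivianiPAMS}. A tempting alternative is to apply Theorem \ref{th1} slice-wise in $y$ and the Euclidean theorem slice-wise in $x$, but since the two approximate identities $\phi_t*_x(\cdot)$ and $g_t*_y(\cdot)$ act at the same time $t$, this still forces a maximal-function argument, so I prefer to argue directly. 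Throughout, ``$u(\cdot,\cdot,t)$ is a solution of \eqref{eq-r11} on $(0,T]$'' is read as: for each $t\in(0,T]$ the convolution $\phi^{WG}_t*f$ is absolutely convergent for a.e.\ $(x,y)$; once that is in hand, smoothness in $(x,y,t)$ and the equation itself follow by differentiating under the integral sign, using $(\partial_t-\Delta_{WG})\phi^{WG}_t=0$ and the rapid decay of $\partial_t\partial^\alpha_{x,y}\phi^{WG}_t$, which is routine.

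\textbf{Necessity.} Fix $t_1\in(0,T]$. If $\phi^{WG}_{t_1}*f$ is finite a.e.\ for every $f\in L^p_v$, then by the Nikishin--Stein principle (a.e.\ finiteness on all of $L^p_v$ of a nonnegative kernel operator forces, at a.e.\ base point, the kernel to lie in the dual $L^{p'}_{v^{-p'/p}}$) there is a point $(x_0,y_0)$ with $\phi^{WG}_{t_1}(x_0-\cdot\,,\,y_0-\cdot)\,v^{-1/p}\in L^{p'}$. Since $\phi_{t_1}$ is continuous and strictly positive on the compact torus, $\phi_{t_1}(x_0-x')\ge c(t_1)>0$; and $g_{t_1}(y_0-y')\ge(4\pi t_1)^{-m/2}e^{-|y_0|^2/2t_1}e^{-|y'|^2/2t_1}$ because $|y_0-y'|^2\le 2|y_0|^2+2|y'|^2$. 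Substituting these lower bounds yields $v^{-1/p}(x',y')\,e^{-|y'|^2/2t_1}\in L^{p'}$, i.e.\ $v\in D^{WG}_p$ with $t_0=t_1/2$.

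\textbf{Sufficiency.} Suppose $v\in D^{WG}_p$ with witness $t_0$, and take $T<\min\{1/2,t_0\}$ (the bound by $t_0$ is what makes the kernel integrable against the weight; the bound $1/2$ is what the torus factor will need). For $0<t\le T$ and $f\in L^p_v$, Hölder gives $\phi^{WG}_t*|f|(x,y)\le\|f\|_{L^p_v}\,\|\phi^{WG}_t(x-\cdot\,,\,y-\cdot)\,v^{-1/p}\|_{p'}$, which is finite: $\phi_t$ is bounded on $\mathbb T^n$, and, absorbing the cross term in $-|y-y'|^2/4t=-(|y'|^2-2y\cdot y'+|y|^2)/4t$, one has $g_t(y-y')\le C(t,y)\,e^{-|y'|^2/4t_0}$ for $t<t_0$, while $v^{-1/p}e^{-|y'|^2/4t_0}\in L^{p'}$ by hypothesis; so $u=\phi^{WG}_t*f$ is a well-defined solution on $(0,T]$. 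For the a.e.\ limit I use the density--plus--maximal scheme. On the dense subclass of bounded functions with $y$--support in a fixed ball (dense in $L^p_v$ by truncation and mollification), $\phi^{WG}_t$ is an approximate identity of total mass $1$ concentrating at the origin of $\mathbb T^n\times\mathbb R^m$, so $\phi^{WG}_t*g\to g$ a.e.\ by classical arguments. For general $f\in L^p_v$ I control $\sup_{0<t\le T}|\phi^{WG}_t*f|$ by splitting $g_t(y-y')$ at a radius $\delta=\delta(y)$: the local piece $|y-y'|<\delta$ is, using $T<1/2$ to dominate $\phi_t$ by the Euclidean Gaussian on the fundamental domain (the wrapping terms being $O(t^{-n/2}e^{-c/t})$), bounded by a local Hardy--Littlewood maximal operator on $\mathbb T^n\times\mathbb R^m$, which is $L^p$--bounded on compacta and whose averages converge a.e.\ to $f$ on the set where $v$ is bounded below; the tail piece $|y-y'|\ge\delta$ is, by Hölder and the $D^{WG}_p$ bound, at most $C\|f\|_{L^p_v}\,t^{-d/2}e^{(C|y|^2-\delta^2)/(C't)}$, which $\to0$ as $t\to0$ once $\delta$ is large relative to $|y|$. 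Writing $f=g+h$ with $g$ in the dense subclass and $\|h\|_{L^p_v}$ small, one gets on each set $\{|y|\le R,\ v\ge\eta\}$ the bound $\lvert\{\limsup_{t\to0^+}|\phi^{WG}_t*f-f|>2\lambda\}\rvert\lesssim\lambda^{-p}\|h\|_{L^p_v}^p$ by Chebyshev; letting $\|h\|_{L^p_v}\to0$, then $\lambda\to0$, then $R\to\infty$ and $\eta\to0$, yields $\lim_{t\to0^+}u(x,y,t)=f(x,y)$ a.e.

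I expect the tail part of the maximal estimate to be the main obstacle. There is no global scaling mixing $\mathbb T^n$ and $\mathbb R^m$, so the truncation radius $\delta$ and every constant in the estimate must be allowed to depend on the base point $y$; and the Gaussian cross term $-2y\cdot y'$ can be absorbed into $e^{-|y'|^2/4t_0}$ only while $t$ stays below the witness $t_0$ --- this is exactly why membership in $D^{WG}_p$ is forced on $v$ and why the time interval on which the solution is even defined cannot be taken arbitrarily large. A secondary technical point is the threshold $t<1/2$, which is what guarantees that the torus heat kernel behaves, on the fundamental domain, like a genuine local approximate identity.
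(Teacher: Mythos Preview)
Your route differs from the paper's. The paper derives Theorem~\ref{th-r1} from the stronger Theorem~\ref{th-r2}: it shows $W^*_R f\lesssim\mathcal M^{WG}f+\phi^{WG}_R*f$ by splitting $\phi^{WG}_t$ at radius $\sim\sqrt{R}$ in the full variable $(x,y)$ and using that the tail piece is \emph{increasing} in $t$ on $(0,R)$; the maximal term is then handled by Proposition~\ref{GP}, which manufactures a target weight $u$ with $\mathcal M^{WG}:L^p_v\to L^p_u$ via the abstract Lemma~\ref{lm-r1} and vector-valued maximal inequalities. You instead split in the $y$-variable at a base-point-dependent radius $\delta(y)$, bound the tail pointwise by $C(y)\|f\|_{L^p_v}$ via H\"older and $D^{WG}_p$, and control the local piece by the unweighted Hardy--Littlewood maximal function on compacta, followed by exhaustion in $R,\eta,\lambda$. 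This avoids the construction of $u$ and is more elementary; it yields the a.e.\ convergence but not the two-weight maximal inequality the paper also records.

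Two steps need repair. In the necessity direction, the general ``Nikishin--Stein principle'' you quote is false: for $K(x,y)=|x-y|^{\alpha-1}$ on $[0,1]$ with $0<\alpha<1/p$ one has $\int K(x,\cdot)f<\infty$ a.e.\ for every $f\in L^p$ by Hardy--Littlewood--Sobolev, while $K(x,\cdot)\notin L^{p'}$ for any $x$. The fix is to use your kernel lower bounds \emph{before} passing to the dual: for each $f\in L^p_v$ choose a point $(x_0,y_0)$, depending on $f$, where $\phi^{WG}_{t_1}*|f|<\infty$; your bounds on $\phi_{t_1}$ and $g_{t_1}$ then give $\int e^{-|y'|^2/2t_1}|f(x',y')|\,dx'dy'<\infty$; since this holds for every $f$, duality now yields $e^{-|\cdot|^2/2t_1}v^{-1/p}\in L^{p'}$. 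This is exactly the mechanism of Proposition~\ref{prop-r1}, $(3)\Rightarrow(1)$, in the paper. In sufficiency, the local maximal function is not bounded from $L^p_v$ into unweighted $L^p$ on compacta: $h\in L^p_v$ gives only $h\in L^1_{\mathrm{loc}}$ (via H\"older and $v^{-p'/p}\in L^1_{\mathrm{loc}}$, which does follow from $D^{WG}_p$), not $h\in L^p_{\mathrm{loc}}$. Your density argument survives, but through the weak $(1,1)$ bound $|\{M_{\mathrm{loc}}h>\lambda\}\cap K|\lesssim\lambda^{-1}\|h\|_{L^1(K')}\lesssim_{K',v}\lambda^{-1}\|h\|_{L^p_v}$ rather than the $\lambda^{-p}\|h\|^p_{L^p_v}$ you wrote.
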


In recent years, there has been a great deal of interest to study dispersive PDEs on waveguide manifold,   see e.g. \cite{Luo, Yu,  Hani} and the references therein.  On the other  hand,  heat equation on waveguide manifold is understudied and  to the best of authors knowledge,   Theorem \ref{th-r1} is the first result for  heat equation on waveguide manifold.

\subsection{Comments on Theorems \ref{th1} and \ref{th-r1}}
Our strategy of  proof is inspired from the seminal work of Hartzstein et al.  in \cite{vivianiPAMS}, where they addressed similar  problem on Euclidean space $\mathbb R^m.$ However,   we  would like to mention that heat kernel/maximal operator  on torus/waveguide manifold is significantly different than on Euclidean space,   and we  shall need some novel tool to treat the problem.   We note that  pointwise convergence  for \eqref{limpt} and boundedness of maximal operator are closely related.  More specifically,  in order to prove Theorems \ref{th1} and \ref{th-r1},   we prove Theorems \ref{th2} and \ref{th-r2}.   See also Remarks \ref{fr1} and \ref{fr2}.
 Let us now briefly mention the   circle of ideas   to  characterized the weighted Lebesgue spaces in which   a.e.  convergence holds (i.e.  strategy to prove Theorems \ref{th2} and \ref{th-r2}):
\begin{itemize}
	\item[-]Define local maximal operator by
	\[
	H^*_Rf(x) = \sup_{0<t<R}\left|\phi_t\ast f(x)\right| \   \quad (0<R<\frac{1}{2})
	\]
and consider the maximal operator $\mathcal{M}^{T}$ on torus (see Definition \ref{HMd}).
\item[-]Since \( H_R^* \) can only be dominated by  \( \mathcal{M}^{T} \) if the heat kernel \( \phi_t \) is supported within a suitable ball, we decompose \( \phi_t \) into two parts.  First part is supported within the required ball, allowing us to invoke a crucial upper bound on \( \phi_t \), thereby \( H_R^* \) is dominated by \( \mathcal{M}^{T} \) (see \eqref{eq-r2}). The second part is dominated by \( \phi_R * f \) due to the increasing nature of \( \phi_t \) over the suitable time interval (see \ref{eq12}).	

\item[-] This essentially reduce our analysis to  characterize boundedness of $\mathcal{M}^{T}$.   See Proposition \ref{GP} and Remark \ref{PSGP} for its proof strategy.  Using this together with Proposition  \ref{prop1},  we conclude that   \( H_R^* \) maps \( L_v^p(\mathbb{T}^n) \) into \( L_u^p(\mathbb{T}^n) \) for some weight \( u \).	
\item[-]In order to have pointwise convergence it is sufficient  to have weak boundedness of \( H_R^* \).
\item[-]Conversely, pointwise convergence implies that \( H_R^*f \) is finite almost everywhere, ensuring that \( f \) belongs to the required weighted space \( L_v^p(\mathbb{T}^n) \) (see the proof of Proposition \ref{prop1}).	
\item[-] Overall  proof strategy for waveguide manifold is 	essentially the same as torus case.  However,  we note that there are several technical differences which one  needs to handle carefully.
\end{itemize}

We conclude this section by adding several further remarks:

\begin{remark}\label{rk1}
By duality argument, we have the following equalities:
$$ 
\begin{aligned}&\bigcup_{\substack{v\in D^{T}_p, \\  p\in[1,\infty)}}L^p_v(\mathbb T^n)= \left\{ f: \mathbb T^n \to \mathbb  C \left| \int_{\mathbb{T}^n} |f(x)| \phi_{t_0}(x) \, dx < \infty \right.\right\}= L^1(\mathbb T^n),\ and\ \\
 &\bigcup_{\substack{v\in D^{WG}_p, \\  p\in[1,\infty)}}L^p_v(\mathbb{T}^n \times \mathbb{R}^m)= \left\{ f: \mathbb{T}^n \times \mathbb{R}^m \to \mathbb  C \left| \int_{\mathbb{T}^n \times \mathbb{R}^m} |f(x,y)|e^{\frac{-|y|^2}{4 t_0}} dxdy < \infty \right.\right\}.
 \end{aligned}$$
\end{remark}

\begin{remark} In \cite{BhimaniNoDEA},  the first author have established finite time-blow up for the fractional  heat equation on torus.   In fact,  many authors have studied Cauchy problem for heat equation associated to  fractional differential operators,  see e.g.  \cite{Flores2023, BhimaniFCAA, BhimaniAdv}.   The analogue of Theorems \ref{th1} and \ref{th-r1} remains open for fractional Laplacians.
\end{remark}    
\begin{remark}  In \cite{torreaTAMS, Flores2023},  authors have studied pointwise convergence   for  heat equation in the presence of potential on $\mathbb R^m.$ Bourgain \cite{Bourgain} have studied  nonlinear Schr\"odinger equations  in the presence of external potentials.   On the other hand,  the pointwise convergence problem   on torus in the presence of potential    remains interesting  open question.  
\end{remark}




\begin{remark}   Although $\mathbb T^n$ is commonly thought of a compact set in $\mathbb R^n$,  the behavior of balls on $\mathbb T^n$ differs from that of balls in the corresponding compact set.  Thus,  the  maximal operator on $\mathbb T^n$ differs from the local maximal operator $\mathcal{M}^T$ on  $\mathbb R^n$ and required careful analysis to establish boundedness of $\mathcal{M}^T$ (see Proposition \ref{GP}).  Moreover, in the context of waveguide  $\mathbb T^n \times \mathbb R^m$,  the solution to the heat equation exhibits a mixed nature,  where each variable behaves according to its respective space.  In this scenario, both cases need to be addressed simultaneously.
\end{remark}

The rest of the paper is organized as follows. In Section \ref{pre}, we describe our setting and provide some estimates of the heat kernels on both \(\mathbb{T}^n\) and \(\mathbb{T}^n \times \mathbb{R}^m\). We also recall some results that will be essential for proving our main results. In Section \ref{mfe}, we characterize the weighted Lebesgue spaces for which we establish the boundedness of maximal operators on both spaces. In Section \ref{mrs}, we present some necessary results and prove our main Theorems \ref{th1} and \ref{th-r1}.

\section{Preliminaries}\label{pre}
The notation $A \lesssim B $ means $A \leq cB$ for a some constant $c > 0 $, whereas $ A \asymp B $ means $c^{-1}A\leq B\leq cA $ for some $c\geq 1$.  The characteristic function  $\chi_{E}$ of a set  $E\subset \mathbb R^n$    is $\chi_{E}(x)=1$ if $x\in E$ and $\chi_{E}(x)=0$ if $x\notin E.$
We are starting by noting that there is a one-to-one correspondence  between functions on $\mathbb R^{n}$ that are 1-periodic in each of the coordinate and functions on torus $\mathbb T^n,$ and we may identify $\mathbb T^{n}=\mathbb R^{n}/\mathbb Z^{n}$ with unit cube $Q_n:=[-1/2 , 1/2)^{n}.$ Let $\mathcal{D}(\mathbb T^n)$ be the vector space  $C^{\infty}(\mathbb T^{n})$ endowed with the usual test function topology, and let $\mathcal{D}'(\mathbb T^{n})$ be its dual, the space of distributions on $\mathbb T^{n}$. Let $\mathcal{S}(\mathbb Z^{n})$ denote the space of rapidly decaying functions $\mathbb Z^{n} \to \mathbb C.$ Let $\mathcal{F}_T:\mathcal{D}(\mathbb T^{n}) \to \mathcal{S}(\mathbb Z^{n})$  be the toroidal Fourier transform (hence the subscript $T$) defined by
$$(\mathcal{F}_Tf)(k):=\hat{f}(k)= \int_{\mathbb T^n}f(x)e^{-i k \cdot x} dx , \  \ (k \in \mathbb Z^{n}).$$
Then $\mathcal{F}_T$ is a bijection and  the inverse Fourier transform is given by
$$ (\mathcal{F}^{-1}_Tf)(x):= \sum_{k \in \mathbb Z^{n}} \hat{f}(k) e^{i k\cdot x}, \   \ (x\in \mathbb T^{n})$$
and this Fourier transform is extended uniquely to $\mathcal{F}_T:\mathcal{D}'(\mathbb T^n) \to \mathcal{S}'(\mathbb Z^n)$.   See \cite{Rbook} for more details.
Let $\mathcal{F}_{WG}:\mathcal{D}(\mathbb T^{n} \times \mathbb R^m ) \to \mathcal{S}(\mathbb Z^{n} \times \mathbb R^m)$  be the waveguide Fourier transform (hence the subscript $WG$) defined by
\[(\mathcal{F}_{WG}f) (k, \xi):= \hat{f}(k, \xi)= \int_{\mathbb T^n \times \mathbb R^m}f(x, y)e^{-i (k \cdot x + \xi \cdot y)} dx\, dy , \  \ (k,  \xi) \in \mathbb Z^{n}\times \mathbb R^m. \]
Then $\mathcal{F}_{WG}$ is a bijection and  the waveguide  inverse  Fourier transform is given by
$$ (\mathcal{F}^{-1}_{WG}f)(x, y):= \sum_{k \in \mathbb Z^{n}} \int_{\mathbb R^n} \hat{f}(k, \xi) e^{i ( k\cdot x + \xi \cdot y)} d\xi,  \    (x, y) \in \mathbb T^{n} \times \mathbb R^m.$$
For $1\leq p<\infty,$ define  the space $L^p_{loc}(\mathbb T^n \times \mathbb R^m)=\{ f: \mathbb T^n \times \mathbb R^m \to \mathbb C \ \text{measurable}: \int_K |f|^p \, d\mu < \infty  \ \forall K \subset \mathbb T^n \times \mathbb R^m,   K \ \text{compact} \}.$ Let \( X \) denote either \( \mathbb{T}^n \) or \( \mathbb{T}^n \times \mathbb{R}^m \). A measurable function \( f \) is said to be in the  weak \( L^p(X) \) space if there exists a constant \( C > 0 \) such that, for all \( \lambda > 0 \),
	\[
	\mu\{x \in X : |f(x)| > \lambda\} \leq \frac{C^p}{\lambda^p},
	\]
	where \( \mu \) is the measure on \( X \).

We recall following  abstract  lemmas, which will be useful to prove  Proposition \ref{GP}.

\begin{lemma}[see Theorem 1.1 in  \cite{MR1240931}] \label{lm-r1}
Let $(Y, \mu)$ be a measurable space, $\mathcal{B}_1$ and $\mathcal{B}_2$ be Banach spaces. Suppose $T$  be a sublinear operator from $T: \mathcal{B}_1 \rightarrow \mathcal{B}_2$ for some $0<s<p<\infty$, satisfying
$$
\left\|\left(\sum_{j \in \mathbb{Z}}\left\|T f_{j}\right\|_{\mathcal{B}_2}^{p}\right)^{\frac{1}{p}}\right\|_{L^{s}(Y)} \leq C_{p, s}\left(\sum_{j \in \mathbb{Z}}\left\|f_{j}\right\|_{\mathcal{B}_1}^{p}\right)^{\frac{1}{p}},
$$
where $C_{p, s}$ is a constant depending on $Y, \mathcal{B}_1, \mathcal{B}_2,p$ and $s$.
Then there exists a positive function $u$ such that $u^{-1} \in L^{\frac{s}{p}}(Y),\left\|u^{-1}\right\|_{L^{\frac{s}{p}}(Y)} \leq 1$ and
$$
\int_{Y}|T f(x)|^{p} u(x) d \mu(x) \leq C\|f\|_{\mathcal{B}_1}^{p}
$$
for some constant $C$ depending on $Y, \mathcal{B}_1, \mathcal{B}_2,p$ and $s$.
\end{lemma}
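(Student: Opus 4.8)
The plan is to derive the weighted scalar inequality from the vector-valued one by a factorization argument in the spirit of Nikishin--Stein and Rubio de Francia: convert the hypothesis into a statement about a convex, bounded set of functions, and then extract the weight $u$ by a minimax/duality step.

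\emph{Setup and reduction.} Put $r:=s/p\in(0,1)$, so that $L^{s/p}(Y)=L^{r}(Y)$ and the requirement $\|u^{-1}\|_{L^{s/p}(Y)}\le 1$ becomes $\int_{Y}u^{-r}\,d\mu\le 1$. Since a sublinear operator is positively homogeneous, $\|T(cf)\|_{\mathcal B_2}=|c|\,\|Tf\|_{\mathcal B_2}$, it is enough to produce $u>0$ with $\int_{Y}u^{-r}\,d\mu\le 1$ and $\int_{Y}\|Tf\|_{\mathcal B_2}^{p}\,u\,d\mu\le C$ whenever $\|f\|_{\mathcal B_1}\le 1$; equivalently, $\sup_{G\in\mathcal K}\int_{Y}Gu\,d\mu\le C$, where
$$\mathcal K:=\Bigl\{\,\textstyle\sum_{j=1}^{N}\|Tf_{j}\|_{\mathcal B_2}^{p}\ :\ N\in\mathbb N,\ f_{j}\in\mathcal B_1,\ \textstyle\sum_{j}\|f_{j}\|_{\mathcal B_1}^{p}\le 1\,\Bigr\}\subset L^{r}(Y).$$
Two structural facts about $\mathcal K$ are what the hypothesis really provides. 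First, $\mathcal K$ is \emph{convex}: given two admissible families, rescale the members of the first by $t_{1}$ and those of the second by $t_{2}$ and take their union; after renormalizing, the resulting element of $\mathcal K$ equals $\theta\,(G_{1}/A_{1})+(1-\theta)\,(G_{2}/A_{2})$ with $\theta=t_{1}^{p}A_{1}/(t_{1}^{p}A_{1}+t_{2}^{p}A_{2})$, and $\theta$ ranges over all of $(0,1)$. Second, $\mathcal K$ is \emph{bounded in $L^{r}(Y)$}, since the hypothesis says exactly that $\|G\|_{L^{r}(Y)}=\bigl\|(\sum_{j}\|Tf_{j}\|_{\mathcal B_2}^{p})^{1/p}\bigr\|_{L^{s}(Y)}^{p}\le C_{p,s}^{p}$ for every $G\in\mathcal K$.

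\emph{Extracting the weight.} I would then consider $I:=\inf\bigl\{\sup_{G\in\mathcal K}\int_{Y}Gu\,d\mu:\ u>0,\ \int_{Y}u^{-r}\,d\mu\le 1\bigr\}$, so that producing a suitable $u$ reduces to proving $I<\infty$. The set $\mathcal U:=\{u>0:\int_{Y}u^{-r}\,d\mu\le 1\}$ is convex because $t\mapsto t^{-r}$ is convex on $(0,\infty)$; the pairing $(G,u)\mapsto\int_{Y}Gu\,d\mu$ is bilinear; and $\mathcal K$ is convex; so a minimax theorem should give $I=\sup_{G\in\mathcal K}\inf_{u\in\mathcal U}\int_{Y}Gu\,d\mu$. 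The inner infimum is a reverse-Hölder optimization, solved by a Lagrange-multiplier computation: the minimizer is $u=(\lambda/G)^{1/(1+r)}$ and the value is $\inf_{u\in\mathcal U}\int_{Y}Gu\,d\mu=\|G\|_{L^{\rho}(Y)}$ with $\rho:=r/(1+r)$. Hence $I=\sup_{G\in\mathcal K}\|G\|_{L^{\rho}(Y)}$; once this is seen to be finite, pick $u\in\mathcal U$ realizing a value within a factor $2$ of $I$, and tracking constants shows the resulting $C$ depends only on $Y,\mathcal B_1,\mathcal B_2,p,s$. Finally, unwinding the reduction: applying the bound to the one-element family $\{f/\|f\|_{\mathcal B_1}\}$ gives $\int_{Y}\|Tf\|_{\mathcal B_2}^{p}\,u\,d\mu\le C\|f\|_{\mathcal B_1}^{p}$, and $\int_{Y}u^{-r}\,d\mu\le 1$ is exactly $\|u^{-1}\|_{L^{s/p}(Y)}\le 1$.

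\emph{The main obstacle.} The hard part is the minimax step, precisely because $\rho=r/(1+r)<r$: on an infinite-measure $Y$, finiteness of $\sup_{G\in\mathcal K}\|G\|_{L^{\rho}(Y)}$ is not a formal consequence of boundedness of $\mathcal K$ in $L^{r}$ alone, and one has to exploit that every element of $\mathcal K$ is built from one fixed bounded vector-valued operator $T$ (which prevents $L^{r}$-mass from escaping to infinity). Concretely, I would run the argument on an exhaustion $Y=\bigcup_{k}Y_{k}$ by finite-measure sets, check that the weights produced on $Y_{k}$ obey a bound uniform in $k$, and pass to a weak-$*$ limit. The second technical cost is justifying the swap $\inf_{u}\sup_{G}=\sup_{G}\inf_{u}$ in the non-locally-convex range $r,\rho<1$: one equips $\mathcal U$ (or a suitable weak-$*$ compact convex subset of it) with a topology making the pairing continuous and a Sion-type compactness hypothesis available. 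Once these are in place, the remaining computations are the bookkeeping indicated above; this is the content of Theorem~1.1 of \cite{MR1240931}, which we invoke directly in the applications.
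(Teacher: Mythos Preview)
The paper does not supply a proof of this lemma: it is quoted from Theorem~1.1 of \cite{MR1240931} and invoked as a black box in the proof of Proposition~\ref{GP}, so there is no in-paper argument to compare your attempt against. Your sketch follows the Maurey--Nikishin--Rubio de Francia factorization that underlies the cited result, and the two obstacles you isolate---justifying the minimax swap in the non-locally-convex range $r=s/p<1$, and bounding $\sup_{G\in\mathcal K}\|G\|_{L^\rho}$ with $\rho=r/(1+r)<r$ on a possibly infinite-measure $Y$---are exactly the places where real work is required.

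Two remarks may sharpen your route. First, in this paper the lemma is applied only with $Y=Q_n$ (torus case) or $Y=A_k$ (waveguide case), both of finite measure; there $L^r(Y)\hookrightarrow L^\rho(Y)$, so $\sup_{G\in\mathcal K}\|G\|_{L^\rho}$ is automatically finite and your infinite-measure worry evaporates. Second, the minimax can be bypassed entirely: since $\mathcal K\subset L^r_+(Y)$ is convex and bounded with $r<1$, Maurey's lemma produces a single $h\in L^r_+(Y)$ with $G\le h$ pointwise for every $G\in\mathcal K$. Taking the weight $u=c\,h^{-1/(1+r)}$, with $c$ fixed by $\int_Y u^{-r}\,d\mu=1$, your own Lagrange computation gives $\int_Y Gu\,d\mu\le\int_Y hu\,d\mu=\|h\|_{L^\rho}$, which is finite on finite-measure $Y$ by the inclusion above. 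So your reduction to the convex bounded set $\mathcal K$ is the correct first move; only the extraction of $u$ is organized differently in the standard references, and your explicit flagging of the gaps shows you have located where the substance lies.
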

\begin{lemma}[Kolmogorov inequality,  see Theorem 3.3.1,  p. 59 in \cite{MR0596037}]\label{ki}
Let $\Omega\subseteq\mathbb{R}^{n}$ and $T$ be a sublinear operator maps \(L_v^p(\Omega)\) into weak \(L_u^p(\Omega)\) for some weights \(u, v\) and \(1 \leq p < \infty\). Then given $ 0<s<p$, there exists a constant $C$ such that for every subset $A \subset \Omega$ with $u(A):=\int_\Omega \chi_A(x)u(x)dx<\infty$, we have
\begin{equation*}
\|Tf\|_{L^s_u{(A})} \leq C \,u(A)^{\frac{1}{s}-\frac{1}{p}}\,\|f\|_{L_v^p(\Omega)}.
\end{equation*}
\end{lemma}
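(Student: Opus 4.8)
The plan is to run the standard Kolmogorov truncation argument, adapted to the weighted measure $u\,dx$; in fact only the weak-type hypothesis on $T$ is needed, sublinearity playing no role in the estimate itself. Write $g=Tf$ and let $C_0$ denote the weak $(p,p)$ constant, so that the hypothesis reads
\[
u\big(\{x\in\Omega:|g(x)|>\lambda\}\big)\le C_0^{p}\,\lambda^{-p}\,\|f\|_{L^p_v(\Omega)}^{p}\qquad(\lambda>0).
\]
Fix $A\subset\Omega$ with $u(A)<\infty$. The first step I would carry out is the layer-cake identity for the finite measure $\chi_A\,u\,dx$,
\[
\int_{A}|g(x)|^{s}u(x)\,dx=s\int_{0}^{\infty}\lambda^{s-1}\,u\big(\{x\in A:|g(x)|>\lambda\}\big)\,d\lambda,
\]
which is just Tonelli and is legitimate precisely because $u(A)<\infty$.

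Next I would bound the level-set measure by the better of the two available estimates,
\[
u\big(\{x\in A:|g(x)|>\lambda\}\big)\le\min\big\{u(A),\;C_0^{p}\,\lambda^{-p}\,\|f\|_{L^p_v(\Omega)}^{p}\big\},
\]
and split the $\lambda$-integral at the balancing radius $\lambda_{0}:=C_0\,\|f\|_{L^p_v(\Omega)}\,u(A)^{-1/p}$, using the first bound on $(0,\lambda_0)$ and the second on $(\lambda_0,\infty)$. The hypothesis $0<s<p$ enters here, and only here, to guarantee convergence of $\int_{\lambda_0}^{\infty}\lambda^{s-1-p}\,d\lambda$. Computing the two elementary integrals and substituting the value of $\lambda_0$ gives
\[
\int_{A}|g|^{s}u\,dx\le\Big(1+\frac{s}{p-s}\Big)C_0^{s}\,\|f\|_{L^p_v(\Omega)}^{s}\,u(A)^{\,1-\frac{s}{p}}=\frac{p}{p-s}\,C_0^{s}\,\|f\|_{L^p_v(\Omega)}^{s}\,u(A)^{\,1-\frac{s}{p}}.
\]
Taking $s$-th roots and using $\tfrac1s\big(1-\tfrac sp\big)=\tfrac1s-\tfrac1p$ then yields
\[
\|Tf\|_{L^s_u(A)}\le\Big(\tfrac{p}{p-s}\Big)^{1/s}C_0\,u(A)^{\frac1s-\frac1p}\,\|f\|_{L^p_v(\Omega)},
\]
which is the assertion with $C=(p/(p-s))^{1/s}C_0$.

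I do not anticipate any real obstacle; the entire argument is soft. The one point that genuinely requires thought is the choice of the split radius $\lambda_0$, which is forced by equating the two competing bounds $u(A)$ and $C_0^p\lambda^{-p}\|f\|_{L^p_v(\Omega)}^p$, everything else being bookkeeping. If one prefers, one may first normalize to $\|f\|_{L^p_v(\Omega)}=1$ — legitimate since both sides are positively homogeneous of degree one in $f$ via $|T(cf)|=|c|\,|Tf|$ — and take $\lambda_0=C_0\,u(A)^{-1/p}$, with no change to the structure of the proof.
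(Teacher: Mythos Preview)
Your argument is correct and is the standard proof of Kolmogorov's inequality via the layer-cake formula and a split at the optimal level $\lambda_0$. Note, however, that the paper does not supply its own proof of this lemma: it is merely quoted from the cited reference (Theorem~3.3.1 in \cite{MR0596037}), so there is no ``paper's approach'' to compare against. One small inaccuracy worth fixing: the layer-cake identity holds by Tonelli for any nonnegative measurable function and does not require $u(A)<\infty$; finiteness of $u(A)$ is needed only so that the trivial bound $u(\{x\in A:|g(x)|>\lambda\})\le u(A)$ is useful on the interval $(0,\lambda_0)$. Also, your closing remark on normalization tacitly assumes the homogeneity $|T(cf)|=|c|\,|Tf|$, which is stronger than mere sublinearity; since your main argument does not use this, it is harmless, but you may wish to drop the aside or add the homogeneity assumption explicitly.
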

\subsection{Heat kernel estimates}
In the following theorem, we establish the crucial lower and upper bounds for heat kernel  $\phi_t(x)= (4\pi t)^{-\frac{n}{2}} \sum_{k \in \mathbb{Z}^n} e^{-\frac{|x+k|^2}{4 t}} $ (see \eqref{ehk}),  which will play  a vital role in our analysis.
\begin{theorem}\label{crb}
For any $t>0$ and for any $x \in Q_n$,
\begin{equation}\label{eq8}
(4\pi t)^{-\frac{n}{2}}e^{-\frac{|x|^2}{4t}} \leq \phi_t(x) \leq2^n\left(1+\sqrt{\frac{\pi}{t}}\right)^{n}e^{-\frac{|x|^2}{4t}}.
\end{equation}
\end{theorem}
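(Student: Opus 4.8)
The plan is to handle the two inequalities separately. The lower bound is immediate: all terms of $\phi_t(x)=(4\pi t)^{-n/2}\sum_{k\in\mathbb Z^n}e^{-|x+k|^2/(4t)}$ are positive, so keeping only $k=0$ already gives $\phi_t(x)\ge(4\pi t)^{-n/2}e^{-|x|^2/(4t)}$. For the upper bound I would reduce to one dimension: since $|x+k|^2=\sum_{j=1}^n(x_j+k_j)^2$, the series factors as $(4\pi t)^{n/2}\phi_t(x)=\prod_{j=1}^n S(x_j,t)$, where $S(a,t):=\sum_{k\in\mathbb Z}e^{-(a+k)^2/(4t)}$ and each $x_j\in[-\tfrac12,\tfrac12]$ because $x\in Q_n$. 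It therefore suffices to prove the one-dimensional estimate
\[
S(a,t)\le\bigl(4\pi+4\sqrt{\pi t}\,\bigr)e^{-a^2/(4t)}\qquad(a\in[-\tfrac12,\tfrac12],\ t>0),
\]
since multiplying over $j$ and dividing by $(4\pi t)^{n/2}$ turns the constant into $\bigl((4\pi+4\sqrt{\pi t})/\sqrt{4\pi t}\bigr)^n=\bigl(2+2\sqrt{\pi/t}\,\bigr)^n=2^n(1+\sqrt{\pi/t})^n$, which is exactly the claimed factor.

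To prove the one-dimensional estimate I separate the central term, $S(a,t)=e^{-a^2/(4t)}+\sum_{k\ne0}e^{-(a+k)^2/(4t)}$, and invoke the elementary inequality
\[
(a+k)^2-a^2=k(k+2a)\ \ge\ |k|\,(|k|-1)\qquad(a\in[-\tfrac12,\tfrac12],\ k\in\mathbb Z),
\]
which one checks by separating $k\ge1$ (then $k+2a\ge k-1\ge0$, so $k(k+2a)\ge k(k-1)$) from $k\le-1$ (then, writing $k=-m$, $k(k+2a)=m(m-2a)\ge m(m-1)$). This lets me pull $e^{-a^2/(4t)}$ out of the entire tail:
\[
\sum_{k\ne0}e^{-(a+k)^2/(4t)}\ \le\ e^{-a^2/(4t)}\cdot 2\sum_{k\ge1}e^{-k(k-1)/(4t)}.
\]

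Next I isolate the $k=1$ term (which equals $1$), use $k(k-1)\ge(k-1)^2$ for $k\ge1$, and apply the monotone integral comparison $\sum_{j\ge1}e^{-j^2/(4t)}\le\int_0^\infty e^{-s^2/(4t)}\,ds=\sqrt{\pi t}$, obtaining $\sum_{k\ge1}e^{-k(k-1)/(4t)}\le1+\sqrt{\pi t}$. Assembling the pieces gives $S(a,t)\le(3+2\sqrt{\pi t})e^{-a^2/(4t)}\le(4\pi+4\sqrt{\pi t})e^{-a^2/(4t)}$, where the last step uses only $3\le4\pi$ and $2\le4$; taking the product over coordinates and dividing by $(4\pi t)^{n/2}$ as above finishes the proof.

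The only step with real content is the inequality $(a+k)^2-a^2\ge|k|(|k|-1)$, which is precisely what keeps the Gaussian factor $e^{-a^2/(4t)}$ alive inside the tail estimate and is the point I expect to be the main obstacle to get right. A cruder bound of the form $S(a,t)\le e^{-a^2/(4t)}+C\sqrt t$ would be useless for small $t$ and $|a|$ of order larger than $\sqrt t$, since then $e^{-a^2/(4t)}$ is exponentially small while $C\sqrt t$ is not; one genuinely has to factor $e^{-a^2/(4t)}$ out of the whole sum before estimating. Everything else is bookkeeping, and the constant $2^n$ is far from sharp — it only absorbs the slack in $3\le4\pi$.
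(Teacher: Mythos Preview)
Your proof is correct and follows essentially the same route as the paper: reduce to $n=1$ via the product structure, factor out $e^{-a^2/(4t)}$ using the elementary inequality $(a+k)^2-a^2\ge |k|(|k|-1)$ for $|a|\le\tfrac12$, then bound the tail via $k(k-1)\ge(k-1)^2$ and the integral comparison $\sum_{j\ge1}e^{-j^2/(4t)}\le\sqrt{\pi t}$. The paper packages the same ideas slightly differently---it writes the sum with $\cosh$, proves the intermediate inequality $\phi_t(x)\le 2\phi_t(0)e^{-x^2/(4t)}$, and then bounds $\phi_t(0)$ via its Fourier representation $\phi_t(0)=1+2\sum_{k\ge1}e^{-tk^2}\le 1+\sqrt{\pi/t}$---but the underlying inequalities are the same, and your version is marginally more direct since it avoids the detour through the Fourier side.
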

\begin{proof} We first assume that $n=1.$ We shall prove that
 \[(4\pi t)^{-\frac{1}{2}}e^{-\frac{x^2}{4t}} \leq \phi_t(0) e^{-\frac{x^2}{4t}} \leq \phi_t(x) \leq 2 \phi_t(0) e^{-\frac{x^2}{4t}} \leq2\left(1+\sqrt{\frac{\pi}{t}}\right)e^{-\frac{x^2}{4t}}. \]
Note that 
\begin{eqnarray*}
\phi_t(x) & = & (4\pi t)^{-\frac{1}{2}} \sum_{k \in \mathbb{Z}} e^ {-\frac{(x+k)^2}{4 t}}
=(4\pi t)^{-\frac{1}{2}} e^{-\frac{x^2}{4 t}} \sum_{k \in \mathbb{Z}} e^{-\frac{ k x}{2t}} e^{-\frac{ k^2}{4t}} \\
& = & (4\pi t)^{-\frac{1}{2}} e^{-\frac{x^2}{4 t}} \left(1+2\sum_{k=1}^{\infty} e^{-\frac{k^2}{4t}}\cosh\left({\frac{k x}{2t}}\right)\right)\\
& \geq &  (4\pi t)^{-\frac{1}{2}} e^{-\frac{x^2}{4 t}} \left(1+2\sum_{k=1}^{\infty} e^{-\frac{k^2}{4t}}\right)= \phi_t(0) e^{-\frac{x^2}{4 t}}.
\end{eqnarray*}
Let 
$$
g(x):=1+2\sum_{k=1}^{\infty} e^{-\frac{k^2}{4t}}\cosh\left({\frac{k x}{2t}}\right).
$$
Since \(\phi_t(x) = (4\pi t)^{-\frac{1}{2}} e^{-\frac{x^2}{4t}} g(x)\) is an even function, we can consider \(x\) in the interval \([0, \frac{1}{2}]\) for any \(t > 0\). This allows us to bound as follows, for any \(k \geq 1\),
\[ 2 \cosh \left(\frac{ k x}{2t}\right) \leq 2 \cosh \left(\frac{k}{4t}\right) \leq e^{-\frac{k}{4t}} + e^{\frac{k}{4t}} \leq 1 + e^{\frac{k}{4t}}. \]
From this, we conclude that
\[ 2 e^{-\frac{k^2}{4t}} \cosh \left(\frac{kx}{2t}\right) \leq e^{-\frac{k^2}{4t}} + e^{-\frac{k^2}{4t}+\frac{k}{4t}} = e^{-\frac{k^2}{4t}} + e^{-\frac{k(k-1)}{4t}}. \]
Utilizing the inequality above, we have
\[ g(x) \leq 1 + \sum_{k=1}^{\infty} e^{-\frac{k^2}{4t}} + \sum_{k=1}^{\infty} e^{-\frac{k(k-1)}{4t}}. \]
As \(k(k-1) \geq (k-1)^2\), we can simplify further
\[ 
\begin{aligned}
g(x) &\leq \left(1 + \sum_{k=1}^{\infty} e^{-\frac{k^2}{4t}}\right) + \left(\sum_{k=1}^{\infty} e^{-\frac{(k-1)^2}{4t}}\right) \\
&= 2\left(1 + \sum_{k=1}^{\infty} e^{-\frac{k^2}{4t}}\right)= 1 + \left(1 + 2\sum_{k=1}^{\infty}e^{-\frac{k^2}{4t}}\right). 
\end{aligned}
\]
Therefore,
\[ (4\pi t)^{-\frac{1}{2}} g(x) \leq (4\pi t)^{-\frac{1}{2}} + \phi_t(0) \]
and
\[ \phi_t(x) \leq \left((4\pi t)^{-\frac{1}{2}}+ \phi_t(0)\right) e^{-\frac{x^2}{4 t}}. \]
Using the formula
\[ \phi_t(x) = (4\pi t)^{-\frac{1}{2}} e^{-\frac{x^2}{4t}} \left(1+2\sum_{k=1}^{\infty} e^{-\frac{k^2}{4t}}\cosh\left({\frac{kx}{2t}}\right)\right), \]
we can get
\[ \phi_t(0) \geq (4\pi t)^{-\frac{1}{2}}. \]
Hence, we obtain \[\phi_t(x) \leq 2 \phi_t(0) e^{-\frac{x^2}{4t}}.\]
For the last inequality, we express \(\phi_t(x)\) using the Fourier series
\[ \phi_t(x) = 1 + 2 \sum_{k=1}^{\infty} e^{-tk^2} \cos(kx). \]
So,
\[ \phi_t(0) = 1 + 2 \sum_{k=1}^{\infty} e^{-tk^2} = 1 + 2 \varphi(t), \]
where \(\varphi(t) = \sum_{k=1}^{\infty} e^{-tk^2}\).
Comparing \(\varphi\) with an integral, we have
\[ \varphi(t) \leq \int_0^{\infty} e^{-tx^2} \,dx = \frac{1}{2} \sqrt{\frac{\pi}{t}}. \]
So,
\[ \phi_t(0) \leq 1 + \sqrt{\frac{\pi}{t}}. \]
Since
$$\phi_t(x)=(4\pi t)^{-\frac{n}{2}} \sum_{k \in \mathbb{Z}^n}e^ {-\frac{|x+k|^2}{4 t}}=\prod_{i=1}^n \left((4\pi t)^{-\frac{1}{2}}\sum_{k_i \in \mathbb{Z}}e^ {-\frac{(x_i+k_i)^2}{4 t}}\right),$$
our result clearly follows from the one-variable case.
\end{proof}

Since $\mathcal{F}_{WG}^{-1} (e^{-t(|k|^2 + |\xi|^2)})= \mathcal{F}_T^{-1}(e^{-t|k|^2}) \mathcal{F}_{\mathbb R^n}^{-1}(e^{-t|\xi|^2})$,  we obtain

\begin{eqnarray}\label{eq5}
\phi_{t}^{W G}(x, y) & := & \mathcal{F}_{W G}^{-1}\left(e^{-t\left(|k|^{2}+|\xi|^{2}\right)}\right)(x,y)\\
& = &  \mathcal{F}^{-1}_T(e^{-t|k|^2})(x) \mathcal{F}^{-1}_{\mathbb R^n}(e^{-t|\xi|^2})(y)\nonumber\\
& = & (4 \pi t)^{-\left(\frac{n}{2}+\frac{m}{2}\right)} \sum_{k\in\mathbb{Z}^{m}} e^{-\frac{|x+k|^{2}+|y|^{2}}{4t}}.\nonumber
\end{eqnarray}
Utilizing the bound of the heat kernel in the case of torus (\ref{eq8}), the following inequality holds.
\begin{corollary} For any \( t > 0 \) and for any \( (x, y) \in \mathbb{T}^n \times \mathbb{R}^m \),
\begin{equation}\label{eq-r17}
	\left(\frac{1}{4\pi t}\right)^{\frac{n+m}{2}}e^{-\frac{|x|^2+|y|^2}{4t}} \leq \phi^{W G}_t(x, y) \leq2^n\left(1+\sqrt{\frac{\pi}{t}}\right)^{n}\left(\frac{1}{4\pi t}\right)^{\frac{m}{2}}e^{-\frac{|x|^2+|y|^2}{4t}}.
\end{equation}
\end{corollary}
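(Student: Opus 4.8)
The plan is to exploit the product structure of the waveguide heat kernel already recorded in~\eqref{eq5} and then insert the torus bounds of Theorem~\ref{crb}. Concretely,~\eqref{eq5} yields
\[
\phi_t^{WG}(x,y)=\phi_t(x)\,p_t(y),\qquad p_t(y):=\mathcal{F}_{\mathbb R^m}^{-1}(e^{-t|\xi|^2})(y)=(4\pi t)^{-\frac m2}e^{-\frac{|y|^2}{4t}},
\]
so the Euclidean factor $p_t$ is a genuine, strictly positive Gaussian. The first step is to record this identity and to observe that, since $p_t(y)>0$, multiplying a two-sided inequality for $\phi_t(x)$ through by $p_t(y)$ preserves both inequalities.

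Next I would invoke the estimate~\eqref{eq8} of Theorem~\ref{crb}, valid for every $t>0$ and every $x\in Q_n$, namely
\[
(4\pi t)^{-\frac n2}e^{-\frac{|x|^2}{4t}}\le\phi_t(x)\le 2^n\left(1+\sqrt{\tfrac{\pi}{t}}\right)^{n}e^{-\frac{|x|^2}{4t}}.
\]
Multiplying through by $p_t(y)=(4\pi t)^{-\frac m2}e^{-\frac{|y|^2}{4t}}$ and merging the Gaussians via $e^{-|x|^2/(4t)}e^{-|y|^2/(4t)}=e^{-(|x|^2+|y|^2)/(4t)}$, the lower bound turns into $(4\pi t)^{-\frac{n+m}{2}}e^{-(|x|^2+|y|^2)/(4t)}$ while the upper bound turns into $2^n\left(1+\sqrt{\pi/t}\right)^{n}(4\pi t)^{-\frac m2}e^{-(|x|^2+|y|^2)/(4t)}$; this is precisely~\eqref{eq-r17}. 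Since~\eqref{eq8} holds on all of $Q_n$, which is our model for $\mathbb T^n$, and $p_t$ is defined on all of $\mathbb R^m$, the resulting inequality holds for every $(x,y)\in\mathbb T^n\times\mathbb R^m$.

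There is no substantial obstacle here; the proof is a direct computation. The one point deserving a line of care is the normalization of the Euclidean heat kernel under the Fourier conventions of Section~\ref{pre}, i.e.\ verifying that $\mathcal{F}_{\mathbb R^m}^{-1}(e^{-t|\xi|^2})(y)=(4\pi t)^{-m/2}e^{-|y|^2/(4t)}$ with exactly the constant already appearing in the factorization~\eqref{eq5}; this is the classical Gaussian Fourier integral. The only other thing to keep straight is the bookkeeping of the powers of $4\pi t$: one gets the full $(n+m)/2$ power in the lower bound, but in the upper bound only the $m/2$ power coming from $p_t$ survives, because on the torus side~\eqref{eq8} replaces $(4\pi t)^{-n/2}$ by the factor $2^n(1+\sqrt{\pi/t})^n$.
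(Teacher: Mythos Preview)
Your proposal is correct and follows essentially the same approach as the paper: the paper simply states that the corollary follows by ``utilizing the bound of the heat kernel in the case of torus~\eqref{eq8}'', relying implicitly on the factorization~\eqref{eq5}, which is exactly what you spell out in detail.
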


\section{Maximal operator estimates on  $\mathbb T^n$ and  $\mathbb T^n \times \mathbb R^m$}\label{mfe}
In this section, we shall define the maximal  operator on tori and waveguide manifolds and  establish its boundedness for  weighted $L^p$ spaces (see Proposition \ref{GP}  below).   It is at this point we would like to mention that  considerations of particular weights in the main results comes due to Proposition \ref{GP}.   In order to pursue  the aim of this section,  we first briefly recall some definitions and set notations.
For any function \( g \) defined on \( \mathbb{R}^{n} \), the local Hardy-Littlewood maximal operator, denoted by \( \mathcal M_{R}g \), where \( 0 < R < \infty \), is defined as follows
\[ \mathcal M_{R}g(x) = \sup_{0 < r \leq R} \frac{1}{|B(x, r)|} \int_{B(x, r)} |g(y)| \, dy .\]
Here, \( B(x, r) \) represents the ball of radius \( r \) centered at \( x \) and $|E|$ denotes the Lebesgue measure of a subset $E\subset \mathbb R^n.$
Since \( |B(x, r)|=|B(0, 1)| r^n,\) we can rewrite 
\begin{equation}\label{eq-r5}
\mathcal M_{R}g(x) =\frac{1}{|B(0, 1)|} \sup_{0 < r \leq R} \frac{1}{r^n} ( |g| \ast\chi_{B{(0,r)}})(x).
\end{equation}
 In particular, if we regard \( g \) as a \( 1 \)-periodic function on \( \mathbb{R}^{n} \) and set \( R = \frac{1}{2} \), then \( \mathcal M_{\frac{1}{2}}g \) will also be \( 1 \)-periodic.   In fact,  for $k\in\mathbb Z^n,$ we have
\[ \begin{aligned} 
\mathcal M_{\frac{1}{2}}g(k+x) & =\sup _{0<r\leq\frac{1}{2}} \frac{1}{|B(k+x, r)|} \int_{B(k+x, r)} g(y) \, dy \\ & =\sup _{0<r\leq\frac{1}{2}} \frac{1}{|B(x, r)|} \int_{B(x, r)} g(k+y) \, dy =\mathcal M_{1 / 2} g(x).
\end{aligned}\]
Given a function  $f$ on  torus $\mathbb T^n$,   we have a corresponding periodic function on  $\mathbb R^n$,  by setting
\[ f^*(x)= f(e^{2\pi i  x})= f(e^{2 \pi i x_{1}}, \ldots, e^{2 \pi i x_{n}})  \quad   (x\in \mathbb R^n). \]
Note that $f^*$ is $1$-periodic in every co-ordinate.

\begin{definition}\label{HMd} Let \( f \) be an integrable function on  $\mathbb T^n$ and $f^{*}$ be its corresponding $1$-periodic extension. We define the maximal operator on $\mathbb T^n$,  denoted by $\mathcal{M}^T f$, by 
\begin{equation*}\label{mf}
\mathcal{M}^T f(x) = \mathcal M_{\frac{1}{2}} f^{*}(x) \quad for \  all \   x \in Q_n.
\end{equation*}
\end{definition}

Since $\mathbb T^{n}\simeq Q_{n},$ we can represent the set $\mathbb{T}^n \times\mathbb{R}^m$ as $Q_{n} \times \mathbb{R}^{m}$. Given a function $f$ defined on $\mathbb{T}^n \times\mathbb{R}^m$, similarly,  we can associate it with a corresponding partially periodic function on $\mathbb{R}^{n+m}$ by the extension
\begin{equation}\label{eq4}
f^{*}(x,y)=f\left(e^{2 \pi i x}, y\right),
\end{equation}
where $e^{2 \pi i x}=\left(e^{2 \pi i x_{1}}, \ldots, e^{2 \pi i x_{n}}\right)$. Note that $f^{*}$ is $1$-periodic in the $x$ variable.

\begin{definition}Let $f$ be a locally integrable function on the waveguide $\mathbb{T}^n \times\mathbb{R}^m$, and let $f^{*}$ be its corresponding periodic  extension (see \eqref{eq4}) on $\mathbb{R}^{n+m}$. 
We  define the maximal operator on the waveguide $\mathbb{T}^n \times\mathbb{R}^m,$  denoted by  $\mathcal M^{WG},$ by
$$
\mathcal M^{W G} f(x,y)=\mathcal M_{\frac{1}{2}} f^{*}(x,y) \quad for \  all \  (x, y) \in Q_n \times\mathbb{R}^m,
$$
where $\mathcal M_{\frac{1}{2}} f^{*}$ represents the local Hardy-Littlewood maximal operator on $\mathbb R^{n+m}$. 
\end{definition}

\begin{remark} \
\begin{itemize}
\item[-]
Since \( \mathcal M_{\frac{1}{2}} f^{*} \) is periodic, the periodic extension of \( \mathcal{M}^T f \) on \( \mathbb{R}^n \) is \( \mathcal M_{\frac{1}{2}} f^{*} \). In other words, \( (\mathcal{M}^T f)^{*} = \mathcal M_{\frac{1}{2}} f^{*} \).
\item[-] Let \(f\) be any measurable function on \(\mathbb{T}^n\) and \(f^*\) be its \(1\)-periodic extension. Then, for any \(x \in Q_n\), every ball \(B(x, r)\), with \(0 < r \leq \frac{1}{2}\), is contained within one period of \(f^*\) without any repetition. Since our main result concerns the pointwise limit, taking \(R = \frac{1}{2}\) is sufficient for this scenario.
\end{itemize}
\end{remark}
 We are now ready to state sharp boundedness result of  $\mathcal{M}^T$ and $\mathcal{M}^{WG}$  on weighted Lebesgue spaces.  Specifically,  we state the following proposition.
\begin{proposition}\label{GP}
Let  \(1 < p < \infty\) and $v$ be a strictly positive weight. Then
\begin{enumerate}
\item \label{prop2} 
  The weight  $v^{-\frac{1}{p}} \in L^{p^{\prime}}(\mathbb T^n)$ if and only if  there exists a weight \(u\) such that the operator \(\mathcal{M}^{T}\) maps \(L_v^p(\mathbb T^n)\) into \(L_u^p(\mathbb T^n)\)  with the norm inequality
\[ \| \mathcal{M}^T    f\|_{L^{p}_u(\mathbb{T}^n)} \lesssim \|f\|_{L^p_v(\mathbb{T}^n)}.\]
And  the weight    $v^{-1} \in L^{\infty}(\mathbb T^n)$ if and only if  there exists a weight \(u\) such that the operator \(\mathcal{M}^T\) maps \(L_v^1(\mathbb T^n)\) into weak \(L_u^1(\mathbb T^n)\).

\item \label{prop-r2}  The weight   $v^{-\frac{1}{p}} \in  L_{loc}^{p^{\prime}}(\mathbb{T}^n \times\mathbb{R}^m)$ if and only if  there exists a weight \(u\) such that the operator \(\mathcal{M}^{WG}\) maps \(L_v^p(\mathbb{T}^n \times\mathbb{R}^m)\) into \(L_u^p(\mathbb{T}^n \times\mathbb{R}^m)\)  with the norm inequality
	\[ \| \mathcal{M}^{WG}    f\|_{L^{p}_u(\mathbb{T}^n \times\mathbb{R}^m)} \lesssim \|f\|_{L^p_v(\mathbb{T}^n \times\mathbb{R}^m)}.\]
And the weight  $v^{-1} \in L_{loc}^{\infty}(\mathbb{T}^n \times\mathbb{R}^m)$ if and only if there exists a weight \(u\) such that the operator \(\mathcal{M}^{WG}\) maps \(L_v^1(\mathbb{T}^n \times\mathbb{R}^m)\) into weak \(L_u^1(\mathbb{T}^n \times\mathbb{R}^m)\). 
\end{enumerate}
\end{proposition}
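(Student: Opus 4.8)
Here is how I would organize the argument. In each part, necessity (``boundedness $\Rightarrow$ integrability condition'') is soft and the same for both cases, while sufficiency uses the two abstract lemmas; I write it out for $\mathcal M^{T}$ on $\mathbb T^{n}$ and then point out the changes for $\mathcal M^{WG}$ on $\mathbb T^{n}\times\mathbb R^{m}$.

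\emph{Necessity.} Suppose $\mathcal M^{T}$ maps $L^{p}_{v}(\mathbb T^{n})$ into $L^{p}_{u}(\mathbb T^{n})$ for some weight $u$ (resp.\ $L^{1}_{v}$ into weak $L^{1}_{u}$ when $p=1$). Then $\mathcal M^{T}f<\infty$ a.e.\ for every $f\in L^{p}_{v}(\mathbb T^{n})$. If $f\ge 0$ and $\mathcal M^{T}f(x_{0})<\infty$, then letting $r\uparrow\tfrac12$ in $\frac{1}{|B(x_{0},r)|}\int_{B(x_{0},r)}f^{*}\le\mathcal M^{T}f(x_{0})$ gives $f^{*}\in L^{1}(B(x_{0},\tfrac12))$; since $\mathbb T^{n}$ is covered by finitely many balls $B(x_{i},\tfrac12)$ with $\mathcal M^{T}f(x_{i})<\infty$, it follows that $f\in L^{1}(\mathbb T^{n})$. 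Thus $L^{p}_{v}(\mathbb T^{n})\subseteq L^{1}(\mathbb T^{n})$; this inclusion has closed graph, hence is bounded, and its operator norm equals $\|v^{-1/p}\|_{L^{p'}(\mathbb T^{n})}$ (use the isometry $g\mapsto g v^{1/p}$ of $L^{p}_{v}$ onto $L^{p}$ and $L^{p}$--$L^{p'}$ duality), so $v^{-1/p}\in L^{p'}(\mathbb T^{n})$. The case $p=1$ is identical and yields $L^{1}_{v}\hookrightarrow L^{1}$, whose norm is $\|v^{-1}\|_{L^{\infty}(\mathbb T^{n})}$. For the waveguide one runs the same finite-covering argument over an arbitrary compact set, obtaining $v^{-1/p}\in L^{p'}_{\loc}(\mathbb T^{n}\times\mathbb R^{m})$ (resp.\ $v^{-1}\in L^{\infty}_{\loc}$).

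\emph{Sufficiency, $1<p<\infty$.} Assume $v^{-1/p}\in L^{p'}(\mathbb T^{n})$. For a finite family $(f_{j})$, Hölder's inequality gives
\[
\left\|\left(\sum_{j}|f_{j}|^{p}\right)^{1/p}\right\|_{L^{1}(\mathbb T^{n})}\le\|v^{-1/p}\|_{L^{p'}(\mathbb T^{n})}\left(\sum_{j}\|f_{j}\|_{L^{p}_{v}(\mathbb T^{n})}^{p}\right)^{1/p},
\]
and combining this with the $\ell^{p}$-valued (hence $p>1$) weak type $(1,1)$ inequality for the local maximal operator $\mathcal M^{T}$ yields
\[
\left\|\left(\sum_{j}|\mathcal M^{T}f_{j}|^{p}\right)^{1/p}\right\|_{L^{1,\infty}(\mathbb T^{n})}\lesssim_{v}\left(\sum_{j}\|f_{j}\|_{L^{p}_{v}(\mathbb T^{n})}^{p}\right)^{1/p}.
\]
Since $\mathbb T^{n}$ has finite measure, $L^{1,\infty}(\mathbb T^{n})\subset L^{s}(\mathbb T^{n})$ with $\|\cdot\|_{L^{s}}\le C|\mathbb T^{n}|^{1/s-1}\|\cdot\|_{L^{1,\infty}}$ for every $0<s<1$ (the elementary instance of the Kolmogorov inequality, Lemma~\ref{ki}), so the left-hand side is also controlled by $(\sum_{j}\|f_{j}\|_{L^{p}_{v}}^{p})^{1/p}$ in $L^{s}(\mathbb T^{n})$ for any such $s$. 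This is exactly the hypothesis of Lemma~\ref{lm-r1} with $\mathcal B_{1}=L^{p}_{v}(\mathbb T^{n})$, $\mathcal B_{2}=\mathbb C$, $Y=\mathbb T^{n}$, $T=\mathcal M^{T}$ and $s\in(0,p)$, which produces a weight $u$ (with $u^{-1}\in L^{s/p}$, $\|u^{-1}\|\le 1$) such that $\|\mathcal M^{T}f\|_{L^{p}_{u}(\mathbb T^{n})}\lesssim\|f\|_{L^{p}_{v}(\mathbb T^{n})}$. For the weak-type statement, $v^{-1}\in L^{\infty}$ gives the pointwise bound $\mathcal M^{T}f\le\|v^{-1}\|_{\infty}\,\mathcal M^{T}(|f|v)$, and since $|f|v\in L^{1}(\mathbb T^{n})$ the ordinary weak $(1,1)$ bound for $\mathcal M^{T}$ shows that $\mathcal M^{T}$ maps $L^{1}_{v}$ into weak $L^{1}(\mathbb T^{n})$ (take $u\equiv 1$).

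\emph{The waveguide case and the main obstacle.} The scheme for $\mathcal M^{WG}$ is the same; the only structural difference is that $v^{-1/p}$ (resp.\ $v^{-1}$) is now only locally in $L^{p'}$ (resp.\ $L^{\infty}$). This is enough because $\mathcal M^{WG}$ is built from balls of radius $\le\tfrac12$, so $\mathcal M^{WG}f(x,y)$ depends only on $f$ on $B((x,y),\tfrac12)$: tiling $\mathbb R^{m}$ by unit cubes $Q_{\nu}$ with one-step enlargements $Q_{\nu}^{*}$, one has $v^{-1/p}\in L^{p'}(\mathbb T^{n}\times Q_{\nu}^{*})$ for each $\nu$, the estimates above hold cube by cube, and they sum over $\nu$ once $u$ is taken to decay fast enough on the $Q_{\nu}$ (the condition $u^{-1}\in L^{s/p}$ from Lemma~\ref{lm-r1}, now applied with $Y=\mathbb T^{n}\times\mathbb R^{m}$, being consistent with such decay). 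The delicate point throughout is the vector-valued estimate feeding Lemma~\ref{lm-r1}: it rests on the $\ell^{p}$-valued weak type $(1,1)$ inequality for the \emph{local} maximal operator — whose balls, unlike balls inside a cube of $\mathbb R^{n}$, wrap around the torus, so that standard cube-based covering arguments have to be adapted — together with the Hölder step in which the sharp hypothesis $v^{-1/p}\in L^{p'}$ enters and the finite-measure Kolmogorov estimate that absorbs the loss from weak $L^{1}$ to $L^{s}$; for the waveguide, the extra work is precisely the localization-and-summation over the non-compact Euclidean factor, legitimate only because $\mathcal M^{WG}$ has bounded reach $\tfrac12$.
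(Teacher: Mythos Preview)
Your sufficiency argument is essentially the paper's: H\"older against $v^{-1/p}\in L^{p'}$, the Fefferman--Stein $\ell^{p}$-valued weak $(1,1)$, Kolmogorov to pass from weak $L^{1}$ to $L^{s}$ on a set of finite measure, and then Lemma~\ref{lm-r1} to manufacture the weight $u$. The paper makes explicit the one step you flag as delicate --- the $\ell^{p}$-valued weak $(1,1)$ for $\mathcal M^{T}$ --- by passing to the periodic extension $f^{*}$, truncating to $\tilde Q_{n}=[-\tfrac32,\tfrac32]^{n}$ so that $\mathcal M^{T}f\le \mathcal M_{1/2}f^{*}_{\tilde Q_{n}}\le \mathcal M f^{*}_{\tilde Q_{n}}$ on $Q_{n}$, and then invoking Proposition~\ref{vvi} on $\mathbb R^{n}$; this is exactly the ``adaptation'' you allude to. For the waveguide the paper uses annuli $A_{k}$ and balls $B_{k}$ rather than your unit-cube tiling of $\mathbb R^{m}$, but the mechanism (local application of the torus argument, then glue the local weights $u_{k}$ with geometrically decaying coefficients) is the same; note that Lemma~\ref{lm-r1} is applied on each piece, not globally on $\mathbb T^{n}\times\mathbb R^{m}$, so your parenthetical about $u^{-1}\in L^{s/p}(\mathbb T^{n}\times\mathbb R^{m})$ is not needed.

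Your necessity argument is a genuine (and cleaner) variant: from $\mathcal M^{T}f<\infty$ a.e.\ you deduce $L^{p}_{v}\subset L^{1}$ and then use the closed graph theorem plus duality to read off $\|v^{-1/p}\|_{L^{p'}}<\infty$. The paper instead works quantitatively from the weak-type bound: for a fixed ball $B(x_{0},R)$ it uses Kolmogorov (Lemma~\ref{ki}) on the averaging operator $M_{2R}$ to get $\int_{B(x_{0},R)} g\,v^{-1/p}\lesssim u(B(x_{0},R))^{-1/p}\|g\|_{L^{p}}$, hence $v^{-1/p}\in L^{p'}(B(x_{0},R))$ by duality, and concludes by compactness. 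Both routes are valid; yours is softer but uses closed graph, while the paper's gives a direct local estimate that transfers verbatim to the non-compact waveguide setting.
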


In order to state Proposition \ref{vvi},   we set some notations.
Let \( f=\left\{f_j\right\} \) be a vector-valued function on \( \mathbb{R}^n \) and \( 1 < p < \infty \).   Denote
\begin{itemize}
\item  $\overline{\mathcal{M}^{p}_{R} f}=\left(\sum_{j}\left|\mathcal M_{R} f_{j}\right|^{p}\right)^{1 / p},$ \quad $ \overline{f^p} = \left(\sum_{j}\left| f_{j}\right|^{p}\right)^{1 / p}$
\item $ \overline{\mathcal{M}^{p} f} = \left(\sum_{j}\left|\mathcal{M} f_{j}\right|^{p}\right)^{1 / p}, $
\end{itemize}
where   Hardy-Littlewood maximal operator $\mathcal{M}f_j$ is given by
\[
\mathcal{M}f_j(x)=\sup_{r>0} \frac{1}{|B(x, r)|} \int_{B(x, r)} |f_j(y)| \, dy.
\]
\begin{proposition}[see Theorem 1 in \cite{MR0284802}]\label{vvi}
	For $1<p,q<\infty,$  we have the vector-valued strong type \( (q, q) \) inequality
	\[ \left\|\overline{\mathcal{M}^{p} f}\right\|_{L^q(\mathbb{R}^n)}\leq C_{p,q}\left\|\overline{f^{p}}\right\|_{L^q(\mathbb{R}^n)}\]
	and the vector-valued weak type \( (1,1) \) inequality
	\[ \left|\left\{x:\overline{\mathcal{M}^{p} f}(x)>\lambda\right\}\right|\leq \frac{C_p}{\lambda}\left\|\overline{f^p}\right\|_{L^1(\mathbb{R}^n)},\quad (\lambda>0) . \]
\end{proposition}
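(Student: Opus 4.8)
The statement collects the two Fefferman--Stein vector-valued maximal inequalities (this is \cite{MR0284802}), and the plan is to deduce both of them from the scalar Hardy--Littlewood maximal theorem through four steps: a trivial diagonal case, a duality/Rubio de Francia argument for large exponents, a Calder\'on--Zygmund argument for the weak endpoint, and Marcinkiewicz interpolation. It is convenient to regard $f=\{f_j\}\mapsto \overline{\mathcal{M}^{p}f}=\big(\sum_j|\mathcal{M}f_j|^{p}\big)^{1/p}$ as a sublinear operator from the $\ell^{p}$-valued Lebesgue space $L^{q}(\mathbb{R}^{n};\ell^{p})$, with norm $\|f\|_{L^{q}(\ell^{p})}=\|\overline{f^{p}}\|_{L^{q}}$, into $L^{q}(\mathbb{R}^{n})$; throughout, $p$ is the inner exponent from the statement and $q$ the outer one, both in $(1,\infty)$. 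First I would dispose of the diagonal case $q=p$: since $\|\overline{\mathcal{M}^{p}f}\|_{L^{p}}^{p}=\sum_j\|\mathcal{M}f_j\|_{L^{p}}^{p}$ and $\|\overline{f^{p}}\|_{L^{p}}^{p}=\sum_j\|f_j\|_{L^{p}}^{p}$, the strong $(p,p)$ inequality is merely the scalar $L^{p}$-boundedness of $\mathcal{M}$ applied termwise. Next, for $q>p$, raising the desired inequality to the power $p$ reduces it to $\big\|\sum_j(\mathcal{M}f_j)^{p}\big\|_{L^{s}}\lesssim\big\|\sum_j|f_j|^{p}\big\|_{L^{s}}$ with $s=q/p>1$; dualizing $L^{s}$ against a nonnegative $\phi$ with $\|\phi\|_{L^{s'}}\le1$ and running the Rubio de Francia algorithm, the function $\Phi=\sum_{k\ge0}(2\|\mathcal{M}\|_{L^{s'}\to L^{s'}})^{-k}\mathcal{M}^{k}\phi$ (with $\mathcal{M}^{k}$ the $k$-fold iterate) satisfies $\phi\le\Phi$, $\|\Phi\|_{L^{s'}}\le2$, and $\mathcal{M}\Phi\le 2\|\mathcal{M}\|_{L^{s'}\to L^{s'}}\Phi$, so $\Phi\in A_{1}\subset A_{p}$ with a constant depending only on $n$ and $s$; by Muckenhoupt's weighted bound (with constant controlled by $[\Phi]_{A_{p}}$) one gets $\int(\mathcal{M}f_j)^{p}\Phi\lesssim\int|f_j|^{p}\Phi$, and summing in $j$ and applying H\"older with $\Phi$ closes the estimate. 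Equivalently one may invoke the Fefferman--Stein inequality $\int(\mathcal{M}h)^{p}\phi\lesssim\int|h|^{p}\mathcal{M}\phi$ directly.

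The heart of the matter is the weak $(1,1)$ bound $|\{\overline{\mathcal{M}^{p}f}>\lambda\}|\lesssim\lambda^{-1}\|\overline{f^{p}}\|_{L^{1}}$, which I expect to be the main obstacle. Fix $\lambda>0$ and apply the Calder\'on--Zygmund decomposition to the scalar function $g=\overline{f^{p}}$ at height $\lambda$, obtaining disjoint cubes $\{Q_k\}$ with $\lambda<|Q_k|^{-1}\int_{Q_k}g\le 2^{n}\lambda$, $|\Omega|\lesssim\lambda^{-1}\|g\|_{L^{1}}$ where $\Omega=\bigcup_kQ_k$, and $g\le\lambda$ a.e.\ off $\Omega$. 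Split each coordinate $f_j=u_j+v_j$ with $u_j$ equal to $f_j$ on $\Omega^{c}$ and to $|Q_k|^{-1}\int_{Q_k}f_j$ on $Q_k$, and $v_j$ supported in $\Omega$ with mean zero on each $Q_k$. Minkowski's integral inequality gives $\overline{u^{p}}\le 2^{n}\lambda$ pointwise and $\|\overline{u^{p}}\|_{L^{1}}\lesssim\|g\|_{L^{1}}$, hence $\sum_j\|u_j\|_{L^{p}}^{p}=\int(\overline{u^{p}})^{p}\lesssim\lambda^{p-1}\|g\|_{L^{1}}$, and the diagonal case together with Chebyshev controls the contribution of $u$. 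Discarding a fixed dilate $\Omega^{*}=\bigcup_kQ_k^{*}$ (still of measure $\lesssim\lambda^{-1}\|g\|_{L^{1}}$), it remains to estimate $\overline{\mathcal{M}^{p}v}$ on $(\Omega^{*})^{c}$, and this is the delicate point: a naive termwise estimate bounding each $\mathcal{M}v_j$ by the superposition of the Poisson-type tails $\lambda|Q_k|(\ell(Q_k)+|\cdot-c_k|)^{-n}$ of the individual cubes is not integrable because of the borderline decay attached to the maximal function, so one must argue more carefully --- exploiting the cancellation of $v_j$ on each cube together with the geometry of the Calder\'on--Zygmund cubes, namely that for $x\notin\Omega^{*}$ the cubes meeting a ball $B\ni x$ but not contained in it have total measure comparable to $|B|$ --- to recover the weak-type bound. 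Here I would follow the original Fefferman--Stein covering argument; this is where the real work of the theorem lies.

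Finally, Marcinkiewicz interpolation applied to the sublinear operator $f\mapsto\overline{\mathcal{M}^{p}f}$ on the $\ell^{p}$-valued spaces, between the weak $(1,1)$ bound just established and the strong $(p,p)$ bound of the diagonal case, yields the strong $(q,q)$ inequality for all $1<q<p$; this is legitimate because the Marcinkiewicz theorem uses only sublinearity and the distribution function of the scalar majorant $\overline{f^{p}}=|f|_{\ell^{p}}$. Together with the diagonal case $q=p$ and the duality argument for $q>p$, this covers every $q\in(1,\infty)$, and since each step is uniform in the inner exponent $p\in(1,\infty)$, both inequalities in the statement follow.
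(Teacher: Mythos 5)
The paper supplies no proof of this proposition: it is quoted as Theorem~1 of Fefferman and Stein \cite{MR0284802} and used as a black box, so there is no in-paper argument to compare your proposal against.

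As a sketch of the Fefferman--Stein proof itself, most of what you write is correct: the diagonal case $q=p$ is the scalar maximal theorem applied termwise; for $q>p$ the duality/Rubio-de-Francia argument (or, equivalently, the pointwise-weighted inequality $\int(\mathcal M h)^p\phi\lesssim\int|h|^p\mathcal M\phi$) closes the strong type; the Calder\'on--Zygmund decomposition of the scalar majorant $g=\overline{f^p}$ with the Minkowski estimate $\overline{u^p}\le 2^n\lambda$ and Chebyshev handles the good part; and Marcinkiewicz interpolation of the sublinear operator $f\mapsto\overline{\mathcal M^p f}$ on the Bochner space $L^q(\ell^p)$ fills in $1<q<p$. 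However, you stop at precisely the step that constitutes the theorem: the weak $(1,1)$ bound for the bad part $v$ off the dilated cubes. You correctly diagnose why the naive estimate fails --- $\mathcal M$ has no kernel, so the mean-zero cancellation of the $v_j$ gives none of the extra decay one would get for a genuine singular integral, and the superposition of $|x-c_k|^{-n}$ tails is not summable --- but then you write that you ``would follow the original Fefferman--Stein covering argument,'' which is a citation rather than an argument. That is the real content of the proposition and, as a proof, it is the one genuine gap in your proposal. In the context of this particular paper, where the result is invoked as standard machinery, deferring that step to the reference is no worse than what the authors themselves do.
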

Before we jump into the proof of  Proposition \ref{GP}, we shall first briefly indicate the key ideas in the following remark.
\begin{remark}\label{PSGP} The following key points are in order to establish the boundedness  of $\mathcal M^{T}$ and $\mathcal{M}^{WG}$.
\begin{itemize}
\item[-] We dominate  $\mathcal{M}^{T}f$ by certain local  maximal operator of its periodic extension ( specifically by
$\mathcal{M}_{\frac{1}{2}}f^{*}_{\tilde{Q}_n}$). 
\item[-] As a consequence,   we are left to show \(L^p_v\)-boundedness of the local maximal operator on some compact subsets in \(\mathbb{R}^n\) (see \eqref{c}).
\item[-] To this end,  we invoke Lemma \ref{lm-r1}. 
\item[-] In order to verify the hypothesis of Lemma \ref{lm-r1},  we invoke  Kolmogorov's inequality (i.e.  Lemma \ref{ki}).
\item[-] In order to verify hypothesis of Lemma \ref{ki},  we invoke weak type $(1,1)$ boundedness of vector valued maximal operator (i.e.  Proposition \ref{vvi}).  This essentially settles the proof on tori.
\item[-] In the case of waveguide $\mathbb T^n\times\mathbb R^m$, which is not compact, we decompose it into disjoint countable annuli \( A_k \) for \( k \geq 0 \). We then decompose (not disjoint) \(\tilde{Q}_n \times \mathbb{R}^m\) into countable balls \( B_k \) (see Figure \ref{fig:graph}). For each \( k \), we substitute \( A_k \) with  \(Q_n\) and \( B_k \) with \(\tilde{Q}_n\). By following the same steps mentioned in the torus case, we can obtain the result by combining these individual components.
\item[-] The converse part follows from the Kolmogorov inequality (i.e.  Lemma \ref{ki}) and a subsequent duality argument.

\end{itemize}
\end{remark}

We will now move on to prove   Proposition \ref{GP}.

\begin{proof}[\textbf{Proof of Proposition \ref{GP},\eqref{prop2}}] 
Assume that $v^{-\frac{1}{p}} \in L^{p^{\prime}}(\mathbb T^n)$ for $1<p<\infty.$ Let $f\in L^p_v(\mathbb{T}^n)$ and $f^*$ be the periodic extention of $f.$
Define the set
	$$G=\left\{\left(x_{1}, \ldots,  x_{n}\right): x_{i} \in\{-1,0,1\}\text{ for } 1 \leq i \leq n\right\}.$$
	Denote $\tilde{Q}_{n}:=\bigcup_{a \in G} \{Q_{n}+a \}\simeq[-\frac{3}{2},\frac{3}{2}]^{n}$ and  put  \( f_{\tilde{Q}_n}^{*}(x) = f^{*}(x) \chi_{\tilde{Q}_n}(x)\). We observe that
	\begin{equation}\label{eq-r1}
		\int_{\tilde{Q}_n} f^{*}_{\tilde{Q}_n} \, dx = 3^n  \int_{\mathbb T^n} f(x) \, dx .
	\end{equation}
Decompose \( f^{*}\) as \( f^{*} = f_{\tilde{Q}_n}^{*} + f_{(\tilde{Q}_n)^c}^{*} \).  Note that $B(x,r)\subset \tilde{Q}_n$ for $x\in Q_n$ and $0<r\leq\frac{1}{2}.$ Thus,  we have 
 \( \mathcal M_{\frac{1}{2}}f^{*}_{(\tilde{Q}_n)^c}(x) = 0 \) for all \( x \in Q_n \). Hence, 
 \begin{equation}\label{lmf}
 \mathcal M^T f(x) = \mathcal M_{\frac{1}{2}} f^{*}(x) \leqslant \mathcal M_{\frac{1}{2}} f_{\tilde{Q}_n}^{*}(x) \quad for \ all \   x \in \mathbb{T}^n.
 \end{equation}
 Note that, for any pisitive weight $u$ on $\mathbb{T}^n,$ we have 
\begin{eqnarray}\label{u1}
\|\mathcal{M}^T f\|_{L_u^p(\mathbb T^n)} & = & \|\mathcal{M}_{\frac{1}{2}} f^*\|_{L_u^p(Q_n)}.
\end{eqnarray} 
 We now claim that  there exists a positive weight on $u$ on $Q_n$ such that  the following inequality holds:
 \begin{eqnarray}\label{c}
 \|\mathcal{M}_{\frac{1}{2}} f^*\|_{L_{u}^p(Q_n)} \lesssim  \|f^*\|_{L^p_{v^*}(\tilde{Q}_n)},
 \end{eqnarray}
 where $v^*$ is the periodic extension of $v$ on $\tilde{Q}_n.$
 By combining \eqref{eq-r1}, \eqref{u1} and \eqref{c},   we can obtain the desired inequality
\[ \| \mathcal{M}^T    f\|_{L^{p}_u(\mathbb{T}^n)} \lesssim \|f\|_{L^p_v(\mathbb{T}^n)}.\]
Now we must prove our claim \eqref{c}.  In fact,  in view of Lemma \ref{lm-r1},  it is sufficient to prove,    for $0<s<p< \infty,$
 \begin{equation}\label{eq-r3}
\left\|\overline{\mathcal{M}^{p}_{\frac{1}{2}} f^*}\right\|_{L^{s}(Q_n)} \lesssim \left(\sum_{j}\left\|f^*_{j}\right\|_{L_{v^*}^{p}(\tilde{Q}_n)}^{p}\right)^{\frac{1}{p}},
\end{equation}
where $f^*=\{f^*_j\}$ is vector-valued $1$-periodic function. Note that, for $f^*_{\tilde{Q}_n}=\{(f^*_{\tilde{Q}_n})_{j}\} ,$ we have
\begin{eqnarray}\label{uw}
  \left\|\overline{\mathcal{M}^{p}_{\frac{1}{2}} f^*}\right\|_{L^{s}\left(Q_n\right)} 
   \leq 
	\left\|\overline{\mathcal{M}^{p}_{\frac{1}{2}} f^*_{\tilde{Q}_n}}\right\|_{L^{s}\left(Q_n\right)}
	\leq \left\|\overline{\mathcal{M}^{p} f^*_{\tilde{Q}_n}}\right\|_{L^{s}\left(Q_n\right)}.
\end{eqnarray}
In view of   Proposition \ref{vvi},  we notice that 
\[\overline{ f^{*\ p}_{\tilde{Q}_n}} \mapsto \overline{\mathcal{M}^{p} f^*_{\tilde{Q}_n}} \]
is a  weak type $(1,1)$ operator.  Now,  by \eqref{uw} and Lemma \ref{ki} and H\"older's inequality,  we obtain 
 \begin{eqnarray*}
   \left\|\overline{\mathcal{M}^{p}_{\frac{1}{2}} f^*}\right\|_{L^{s}\left(Q_n\right)} 
	&\lesssim& \left\|\overline{ f^{*\ p}_{\tilde{Q}_n}}\right\|_{L^{1}({\tilde{Q}_n})}\\
	& \lesssim & \left(\int_{{\tilde{Q}_n}} \sum_{j}\left|f^*_{j}(x)\right|^{p} v^*(x) d x\right)^{\frac{1}{p}}\left(\int_{{\tilde{Q}_n}} (v^*)^{-\frac{p^{\prime}}{p}}(x) d x\right)^{\frac{1}{p'}}.
 \end{eqnarray*}
Since $v^{-\frac{1}{p}}\in L^{p'}(\mathbb T^n),$  the above inequality proves  \eqref{c}.
For the case \( p = 1 \), we utilize the weak \( (1,1) \) bound of the standard Hardy-Littlewood maximal operator. For $\lambda\geq 0,$ we have
\begin{eqnarray}\label{L1}
	\left|\{x\in \mathbb{T}^n: \mathcal{M}^T f(x)>\lambda\}\right|\nonumber
	&\leq&\left|\{x\in Q_n: \mathcal{M}_{\frac{1}{2}}f^*_{\tilde{Q}_n}(x)>\lambda\}\right|\\\nonumber
	&\leq&\left|\{x\in Q_n: \mathcal{M}f^*_{\tilde{Q}_n}(x)>\lambda\}\right|\\
	&\leq& \frac{C}{\lambda}\left\|f^*_{\tilde{Q}_n}\right\|_{L^1({\tilde{Q}_n})}=\frac{C}{\lambda} \int_{\tilde{Q}_n}|f^*(x)| d x \\\nonumber
	& \leq& \frac{C}{\lambda}\left(\int_{\tilde{Q}_n}|f^*(x)| v^*(x) d x\right)\left\|(v^*)^{-1} \chi_{\tilde{Q}_n}\right\|_{L^{\infty}({\tilde{Q}_n})} \\\nonumber
	& \leq &\frac{C_{n, v}}{\lambda} \int_{\tilde{Q}_n}|f^*(x)| v^*(x) d x =\frac{C_{k, v}}{\lambda}\|f\|_{L^1_v(\mathbb{T}^n)},\nonumber
\end{eqnarray}
where $C_{n, v}$ is a constant depending on $v$ and $n.$ Hence, the weight \( u(x) = \chi_{Q_n}(x) \) provides weak boundedness for the case \( p = 1 \).

Now we shall  prove the converse part.   Assume that the operator \(\mathcal{M}^T\) maps \(L_v^p(\mathbb{T}^n)\) into weak \(L_u^p(\mathbb{T}^n)\) for some weight \(u\) and for \(1 \leq p < \infty\).   Let us define the operator
\begin{equation*}
M_{r}f(x) = \frac{1}{r^n} \left( |f| \ast \chi_{B(0,r)} \right)(x)\quad (0<r\leq\frac{1}{2}).
\end{equation*}
Since $\mathcal{M}^Tf(x) = \frac{1}{|B(0, 1)|} \sup_{0 < r \leq \frac{1}{2}} M_{r}f(x)$ (see \eqref{eq-r5}), the operator \(M_{r}\) also maps \(L_v^p(\mathbb{T}^n)\) into the weak \(L_u^p(\mathbb{T}^n)\) space.
Let $0<R\leq \frac{1}{4}$ and \( x_0 \in \mathbb{T}^n \). Then \( B\left(x_0, R\right) \subset B(x, 2R) \) for \( x \in B\left(x_0, R\right) \). For any nonnegative \( f \in L^p_v(\mathbb{T}^n) \), we have
\[
M_{2R} f(x) = \frac{1}{(2R)^n}\int_{B(x, 2R)} f(y) \, dy \geq \frac{1}{(2R)^n}\int_{B\left(x_0, R\right)} f(y) \, dy 
\]
for all $x \in B(x_0, R).$ Hence,
\[	u\left(B\left(x_0, R\right)\right)^{\frac{1}{s}}\frac{1}{(2R)^n}\left( \int_{B\left(x_0, R\right)} f(x) d x\right)\leq\left(\int_{B\left(x_0, R\right)} M_{2R} f(x)^s u(x) d x\right)^{\frac{1}{s}}.\]
Using Lemma \ref{ki} for \( s < p \), we have
$$
\left(\int_{B\left(x_0, R\right)} M_{2R} f(x)^s u(x) d x\right)^{\frac{1}{s}}
\lesssim \,u\left(B\left(x_0, R\right)\right)^{\frac{1}{s}-\frac{1}{p}}\left(\int_{\mathbb{T}^n} f^p(x) v(x) d x\right)^{\frac{1}{p}}.
$$
Choosing $f=g v^{-\frac{1}{p}}$ such that \( g \in L^p(\mathbb{T}^n) \), we can rewrite
$$
\left( \int_{B\left(x_0, R\right)} g(x) v^{-\frac{1}{p}}(x) d x\right) \lesssim \, u\left(B\left(x_0, R\right)\right)^{-\frac{1}{p}}\left(\int_{\mathbb{T}^n} g^p(x) d x\right)^{\frac{1}{p}} .
$$
By duality we can conclude that $v^{-\frac{1}{p}}$ belongs to $L^{p^{\prime}}\left(B\left(x_0, R\right)\right)$. Since \(\mathbb{T}^n\) is compact, we can conclude our claim.
\end{proof}

\begin{remark}\label{rm-r1}
It should be noted that in the proof of \eqref{c}, there is no specific  uniqueness assigned to the sets $Q_n$ and ${\tilde{Q}_n}$.  In fact,  we can claim the same conclusion for more generic sets.  Specifically,   we can substitute $Q_n$ with annulus $A_k$ and ${\tilde{Q}_n}$ with ball $B_k$ for any $k\geq0$ (the precise definitions of $A_k$ and $B_k$ will be provided in the proof of Proposition \ref{GP},(\ref{prop-r2}). Here, $B_k$ must contain $A_k$ and satisfy the condition
$$
\mathcal M_{\frac{1}{2}} f_{B_{k}^{c}}^{*}(x, y)=0\text { for all }(x, y) \in A_{k}.
$$
Subsequently, by following a parallel procedure to the proof of \eqref{c}, there exist a positive weight $u_k$ such that
\begin{eqnarray}
	\|\mathcal{M}_{\frac{1}{2}} f^*\|_{L_{u_k}^p(A_k)} \lesssim  \|f^*\|_{L^p_{v^*}(B_k)},
\end{eqnarray}
where $f^*$ and $v^*$ are well-defined on $B_k$.
\end{remark}

\begin{proof}[\textbf{Proof of Proposition \ref{GP},\eqref{prop-r2}}]
 Assume that  $1<p<\infty$.  For $k\geq 1,$ let define the sets
	$$
	\begin{aligned}
		A_{0}=&\left\{(x, y) \in Q_{n} \times \mathbb{R}^{m}:|(x, y)|<\frac{1}{2}\right\},\\ 
		A_{k}=&\left\{(x, y) \in Q_{n} \times \mathbb{R}^{m}: k-\frac{1}{2} \leq|(x, y)|<k+\frac{1}{2}\right\},\\
		B_{0}=&\left\{(x, y) \in \tilde{Q}_{n} \times \mathbb{R}^{m}:|(x, y)|<\frac{3}{2}\right\},\\
		B_{k}=&\left\{(x, y) \in \tilde{Q}_{n} \times \mathbb{R}^{m}:|(x, y)|<k+\frac{3}{2}\right\}.
	\end{aligned}
	$$
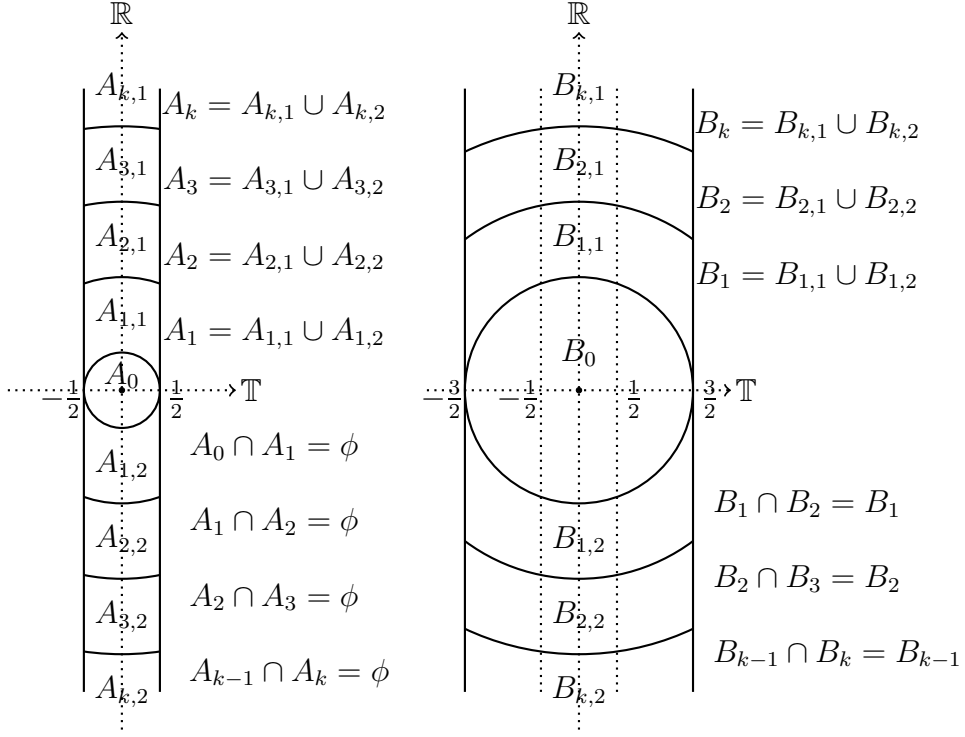
\begin{figure}[h]
	\centering
	\begin{tikzpicture}[scale=1]
		\draw[thick,dotted, ->] (-1.5,0) -- (1.5,0);
		\draw[thick,dotted, ->] (0,-4.5) -- (0,4.75);

		\filldraw(0,0) circle (1pt);
		
		\node at (1.7,0) {$\mathbb {T}$};
		\node at (0,5) {$\mathbb R$};
		\node at (0.72,-0.12) {$\frac{1}{2}$};
		\node at (-0.8,-0.12) {$-\frac{1}{2}$};
		\node at (0,0.2) {$A_0$};
		\node at (0,1) {$A_{1,1}$};
		\node at (0,2) {$A_{2,1}$};
		\node at (0,3) {$A_{3,1}$};
		\node at (0,4) {$A_{k,1}$};
		\node at (0,-1) {$A_{1,2}$};
		\node at (0,-2) {$A_{2,2}$};
		\node at (0,-3) {$A_{3,2}$};
		\node at (0,-4) {$A_{k,2}$};
		\node at (2,0.75) {$A_1=A_{1,1}\cup A_{1,2}$};
		\node at (2,1.75) {$A_2=A_{2,1}\cup A_{2,2}$};
		\node at (2,2.75) {$A_3=A_{3,1}\cup A_{3,2}$};
		\node at (2,3.75) {$A_k=A_{k,1}\cup A_{k,2}$};
		\node at (2,-0.75) {$A_0\cap A_1=\phi$};
		\node at (2,-1.75) {$A_1\cap A_2=\phi$};
		\node at (2,-2.75) {$A_2\cap A_3=\phi$};
		\node at (2.2,-3.75) {$A_{k-1}\cap A_k=\phi$};

		\draw[thick] (-0.5, -4) -- (-0.5, 4);
		\draw[thick] (0.5, -4) -- (0.5, 4);
		
		\draw[thick, black] (0,0) circle [radius=0.5];
		
		\clip (-.5,-4) rectangle (.5,4);
		\draw[thick] (0,0) circle [radius=1.5];
		
		\clip (-.5,-4) rectangle (.5,4);
		\draw[thick] (0,0) circle [radius=2.5];
		
		\clip (-0.5,-4) rectangle (0.5,4);
		\draw[thick] (0,0) circle [radius=3.5];
		
	\end{tikzpicture}
	\begin{tikzpicture}[scale=1]
		\draw[thick,dotted, ->] (-2,0) -- (2,0);
		\draw[thick,dotted, ->] (0,-4.5) -- (0,4.75);

		\filldraw(0,0) circle (1pt);
		
		\node at (2.2,0) {$\mathbb {T}$};
		\node at (0,5) {$\mathbb R$};
		\node at (0.72,-0.12) {$\frac{1}{2}$};
		\node at (-0.8,-0.12) {$-\frac{1}{2}$};
		\node at (1.72,-0.12) {$\frac{3}{2}$};
		\node at (-1.8,-0.12) {$-\frac{3}{2}$};
		\node at (0,.5) {$B_0$};
		\node at (0,2) {$B_{1,1}$};
		\node at (0,3) {$B_{2,1}$};
		\node at (0,4) {$B_{k,1}$};
		\node at (0,-2) {$B_{1,2}$};
		\node at (0,-3) {$B_{2,2}$};
		\node at (0,-4) {$B_{k,2}$};
		\node at (3,1.5) {$B_1=B_{1,1}\cup B_{1,2}$};
		\node at (3,2.5) {$B_2=B_{2,1}\cup B_{2,2}$};
		\node at (3,3.5) {$B_k=B_{k,1}\cup B_{k,2}$};
		\node at (3,-1.5) {$B_1\cap B_2=B_1$};
		\node at (3,-2.5) {$B_2\cap B_3=B_2$};
		\node at (3.4,-3.5) {$B_{k-1}\cap B_k=B_{k-1}$};

		\draw[thick,dotted] (-0.5, -4) -- (-0.5, 4);
		\draw[thick,dotted] (0.5, -4) -- (0.5, 4);
		\draw[thick] (-1.5, -4) -- (-1.5, 4);
		\draw[thick] (1.5, -4) -- (1.5, 4);
		
		\draw[thick] (0,0) circle [radius=1.5];
		
		\clip (-1.5,-4) rectangle (1.5,4);
		\draw[thick] (0,0) circle [radius=3.5];
		
		\clip (-1.5,-3) rectangle (1.5,3);
		\draw[thick] (0,0) circle [radius=2.5];
		
	\end{tikzpicture}
	\caption{We have drawn the graphs of $A_k$ and $B_k$ in the case of $\mathbb T\times\mathbb R$. On the left side, we have shown the disjoint annuli $A_k$, while on the right side, we have illustrated the balls $B_k$ containing $A_k$.}
	\label{fig:graph}
\end{figure}
Note that $A_0$ is the open ball with a radius of $\frac{1}{2}$, while $A_k$ are disjoint annulus in $Q_n \times \mathbb R^m$. Meanwhile, $B_0$ and $B_k$ are open balls with radius of $\frac{3}{2}$ and $k+\frac{3}{2}$ containing $A_0$ and $A_k,$ respectively. Consequently, we have $\bigcup_{k=0}^\infty A_k = Q_{n} \times \mathbb{R}^{m}$ and $\bigcup_{k=0}^\infty B_k = \tilde{Q}_{n} \times \mathbb{R}^{m}$. We refer to Figure \ref{fig:graph}.
For each $k\geq0$, we decompose $f^{*}=  f^*\chi_{B_{k}}+f^*\chi_{B^c_{k}} =   f_{B_{k}}^{*}+f_{B^c_{k}}^{*}$.  Since $A_k \subset B_k,$ we  have
	$$
	\mathcal M_{\frac{1}{2}} f_{B_{k}^{c}}^{*}(x, y)=0\ for\ all\ (x, y) \in A_{k}.
	$$
	Hence, we have 
	$$\mathcal M^{W G} f(x, y)=\mathcal M_{\frac{1}{2}} f^{*}(x, y) \leq \mathcal M_{\frac{1}{2}} f_{B_{k}}^{*}(x, y)\  for\ all\ (x, y) \in A_{k}.$$
	Note that, for any positive weight $u_k$ on $A_k,$ we have 
	\begin{eqnarray}\label{}
		\|\mathcal{M}^{WG} f\|_{L^p_{u_k}(A_k)} & = & \|\mathcal{M}_{\frac{1}{2}} f^*\|_{L^p_{u_k}(A_k)}.
	\end{eqnarray} 
	Let  $v^*$ be the periodic extension of $v$ on ${\tilde{Q}_n\times \mathbb R^m}.$ For each $k\geq0,$ putting all of these setups $A_k$, $B_k$, $f^*$, and $v^*$ into Remark \ref{rm-r1}, we conclude that there exists a weight $u_k$ on $A_k$ such that the following inequality holds
	\begin{eqnarray}
		\|\mathcal{M}_{\frac{1}{2}} f^*\|_{L_{u_k}^p(A_k)} \leq C_k \|f^*\|_{L^p_{v^*}(B_k)}\leq C_k \|f^*\|_{L^p_{v^*}(\tilde{Q}_n\times\mathbb R^m)}.
	\end{eqnarray}
	Since the family of weights $u_k$, each one with support in $A_k$, can be combined together, we can express $u(x,y)$ as a weight on $Q_{n} \times \mathbb{R}^{m}$ as follows
	$$u(x,y)=\sum_{k\geq 0 }\frac{1}{\left(2^k C_{k}\right)^p} u_k(x,y) \chi_{A_k}(x,y).$$
	Hence, we conclude that
	\begin{eqnarray}\label{eq-r15}
		\|\mathcal{M}_{\frac{1}{2}} f^*\|_{L_{u}^p(Q_{n} \times \mathbb{R}^{m})} \lesssim \|f^*\|_{L^p_{v^*}(\tilde{Q}_n\times\mathbb R^m)}.
	\end{eqnarray}
Similar to \eqref{eq-r1}, we can write the analogous equation for the waveguide case
\begin{equation}\label{eq-r14}
	\int_{\tilde{Q}_{n} \times \mathbb{R}^{m}} f^{*}(x,y)dx\,dy=3^{n} \int_{\mathbb T^{n} \times \mathbb R^{m}} f(x,y) d x\, dy.
\end{equation}
By combining \eqref{eq-r15} and \eqref{eq-r14},   we get the desired inequality
\[ \| \mathcal{M}^{WG}    f\|_{L^{p}_u(\mathbb{T}^n\times\mathbb R^m)} \lesssim \|f\|_{L^p_v(\mathbb{T}^n\times\mathbb R^m)}.\]
For the case \( p = 1 \), we will use the weak type \( (1,1) \) boundedness of the Hardy-Littlewood maximal operator. For \(\lambda \geq 0\), by a similar estimate as in \eqref{L1}, replacing \( Q_n \) with \( A_k \) and \(\tilde{Q}_n\) with \( B_k \) for each $k\geq0,$ we obtain 
\[
\begin{aligned}
	\left|\{(x,y)\in A_k: \mathcal{M}_{\frac{1}{2}}f^*(x,y)>\lambda\}\right|
	& \leq \frac{C_{k, v}}{\lambda} \int_{B_{k}}|f^*(x,y)| v^*(x,y) d xdy \\
	&=\frac{C_{k, v}}{\lambda}\|f^*\|_{L^1_{v^*}(\tilde{Q}_{n} \times \mathbb{R}^{m})}.
\end{aligned}
\]
Hence, the weight $u^*(x,y)=\sum_{k\geq 0 }\frac{1}{\left(2^k C_{k,v}\right)^p} \chi_{A_k}(x,y)$ provides 
	\begin{equation}\label{eq-r16}
	\left|\{(x,y)\in\tilde{Q}_{n} \times \mathbb{R}^{m} : \mathcal{M}_{\frac{1}{2}}f^*(x,y)>\lambda\}\right| \lesssim\frac{1}{\lambda}\|f^*\|_{L^1_{v^*}(\tilde{Q}_n\times\mathbb R^m)}.
	\end{equation}
By combining  \eqref{eq-r14} and \eqref{eq-r16},   we get the desired weak boundedness.
On the other hand, by proceeding in exactly the same manner as in the converse part of Proposition \ref{GP},\eqref{prop2}, we can conclude that \(v^{-\frac{1}{p}} \in L_{loc}^{p^{\prime}}(\mathbb{T}^n\times\mathbb{R}^m)\) for \(1 < p < \infty\), and \(v^{-1} \in L_{loc}^{\infty}(\mathbb{T}^n\times\mathbb{R}^m)\) when $p=1$.
\end{proof}

\begin{remark}
The strong type \((p,p)\) boundedness in Proposition \ref{GP} does not hold for \(p=1\).  For example, consider the weight \(v\) as constant. And \(f(x) = |x|^{-n} \left(\ln \frac{e}{x}\right)^{-2} \chi_{B(0,r)} \in L^1(\mathbb{T}^n)\) for some \(0 < r < \frac{1}{2}\). However, \(|x|^{-n} \left(\ln \frac{e}{x}\right)^{-1} \lesssim \mathcal{M}^{T} f(x)\), which is not locally integrable near $0$. Since \(f\) is supported in a small ball, a similar example works for \(\mathbb{T}^n \times \mathbb{R}^m\).  See also  Remark 2.3 in \cite{torreaTAMS}.
\end{remark}

\section{Proof of  Main Results}\label{mrs}
\subsection{Characterization of  weight classes  $D^{T}_p$ and $D^{WG}_p$ }
In this section,  we shall characterize weight classes  $D^{T}_p$ and $D^{WG}_p$,  defined in the introduction  (see Definitions \ref{wct} and \ref{def-r1}).
\begin{proposition}[characterization of  $D^{T}_p$]\label{prop1}
Let $v$ be a pisitive weight in $\mathbb{T}^n$and $ 1 \leq p<\infty.$
Then the  following statements are equivalent:
\begin{enumerate}
\item \label{1} The weight $v\in D_p^{T}$ for some $ 1 \leq p<\infty.$
\item \label{2} There exists $t_0>0$ and a weight $u$ such that the operator $f \rightarrow \phi_{t_0} * f$ maps $L_v^p(\mathbb T^n)$ into $L_u^p(\mathbb T^n)$ for $p>1,$ and maps $L_v^1(\mathbb T^n)$ into weak $L_u^1(\mathbb T^n)$ when $p=1$.
\item \label{3} There exists $t_0>0$ such that $\phi_{t_0} * f(x)$ is finite almost everywhere for all $f \in L_v^p(\mathbb T^n)$.
\end{enumerate}
\end{proposition}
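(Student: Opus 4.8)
The plan is to establish the cycle $(1)\Rightarrow(2)\Rightarrow(3)\Rightarrow(1)$, the whole argument resting on one elementary observation about the compact torus. By Theorem \ref{crb}, for a fixed $t_0>0$ one has $(4\pi t_0)^{-n/2}e^{-|x|^2/(4t_0)}\le\phi_{t_0}(x)\le 2^n(1+\sqrt{\pi/t_0})^n$ for $x\in Q_n$, and since $|x|^2\le n/4$ on $Q_n$ this produces constants $0<c_{t_0}\le C_{t_0}<\infty$ with $c_{t_0}\le\phi_{t_0}(x)\le C_{t_0}$ for all $x\in\mathbb T^n$. Hence all the kernels $\phi_{t_0}$ are mutually comparable, the class $D_p^T$ does not depend on the choice of $t_0$, and $D_p^T=\{v:\,v^{-1/p}\in L^{p'}(\mathbb T^n)\}$ when $p>1$, while $D_p^T=\{v:\,v^{-1}\in L^\infty(\mathbb T^n)\}$ when $p=1$; in particular, by Hölder's inequality, $v\in D_p^T$ forces $L^p_v(\mathbb T^n)\hookrightarrow L^1(\mathbb T^n)$.

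With this in hand, $(1)\Rightarrow(2)$ is immediate: if $v\in D_p^T$ then every $f\in L^p_v(\mathbb T^n)$ lies in $L^1(\mathbb T^n)$, so $\phi_{t_0}\ast f$ is well defined and, by Young's inequality, $\|\phi_{t_0}\ast f\|_{L^\infty(\mathbb T^n)}\le C_{t_0}\|f\|_{L^1(\mathbb T^n)}\lesssim\|f\|_{L^p_v(\mathbb T^n)}$; since $\mathbb T^n$ has finite measure, $L^\infty(\mathbb T^n)\subset L^p(\mathbb T^n)\subset$ weak $L^1(\mathbb T^n)$, so the weight $u\equiv 1$ already works. (One may alternatively route this through Proposition \ref{GP}, but the direct estimate is cleaner here.) The implication $(2)\Rightarrow(3)$ is routine: membership in $L^p_u$, resp. weak $L^1_u$, forces $\phi_{t_0}\ast f$ to be finite $u$-a.e., and a weight being a.e. positive, $\phi_{t_0}\ast f$ is finite a.e.

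The content of the statement is $(3)\Rightarrow(1)$, which I would prove by contraposition, exploiting the \emph{lower} bound $\phi_{t_0}\ge c_{t_0}>0$: it suffices to exhibit, whenever $v\notin D_p^T$, a single $0\le f\in L^p_v(\mathbb T^n)$ with $\int_{\mathbb T^n}f=\infty$, for then $\phi_{t_0}\ast f(x)\ge c_{t_0}\int_{\mathbb T^n}f=\infty$ at every $x$, contradicting (3) for every choice of $t_0$. For $p>1$, $v\notin D_p^T$ means $w:=v^{-1/p}\notin L^{p'}(\mathbb T^n)$, so $\sup\{\int_{\mathbb T^n}wg:\,0\le g\in L^p,\,\|g\|_p\le 1\}=\|w\|_{p'}=\infty$; choosing $g_j\ge 0$ with $\|g_j\|_p\le 1$ and $\int_{\mathbb T^n}wg_j\ge 4^j$ and setting $g:=\sum_j 2^{-j}g_j\in L^p(\mathbb T^n)$, the function $f:=v^{-1/p}g$ satisfies $fv^{1/p}=g\in L^p$ (so $f\in L^p_v$) and, by monotone convergence, $\int_{\mathbb T^n}f=\int_{\mathbb T^n}wg=\sum_j 2^{-j}\int_{\mathbb T^n}wg_j=\infty$. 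For $p=1$, $v\notin D_1^T$ means $v^{-1}$ is essentially unbounded, so the sets $E_j:=\{2^{-j-1}<v\le 2^{-j}\}$ have positive measure for an increasing sequence of indices $j_1<j_2<\cdots$ (with $j_k\ge k-1$); the function $f:=\sum_k 2^{j_k/2}|E_{j_k}|^{-1}\chi_{E_{j_k}}$ then has $\int_{\mathbb T^n}fv\le\sum_k 2^{-j_k/2}<\infty$ (so $f\in L^1_v$) and $\int_{\mathbb T^n}f=\sum_k 2^{j_k/2}=\infty$.

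I do not expect a serious technical obstacle here; the real point is conceptual, namely that on the compact torus the two-sided bound of Theorem \ref{crb} collapses $D_p^T$ to the Hölder-duality condition $v^{-1/p}\in L^{p'}$ and collapses the pointwise finiteness of $\phi_{t_0}\ast f$ to the finiteness of $\int_{\mathbb T^n}f$. The genuinely delicate estimates are instead reserved for the maximal operators (Proposition \ref{GP}), where the geometry of balls on $\mathbb T^n$, and later the noncompact factor $\mathbb R^m$, actually enter.
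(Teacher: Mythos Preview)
Your argument is correct and genuinely simpler than the paper's. The key difference is that you exploit, right at the outset, the \emph{two-sided} bound $c_{t_0}\le\phi_{t_0}\le C_{t_0}$ on the compact torus (obtained from Theorem~\ref{crb} together with $|x|^2\le n/4$ on $Q_n$), which collapses the class $D_p^T$ to the plain duality condition $v^{-1/p}\in L^{p'}(\mathbb T^n)$ and reduces pointwise finiteness of $\phi_{t_0}\ast f$ to the single question of whether $\int_{\mathbb T^n}|f|<\infty$. The paper instead argues as in the noncompact Euclidean model: for $(1)\Rightarrow(2)$ it estimates $\|\phi_t(x-\cdot)v^{-1/p}\|_{L^{p'}}$ by splitting into $\{|x-y|\le|x|\}$ and its complement and comparing $\phi_t(x-y)$ with $\phi_{t/4}(y)$ via Theorem~\ref{crb}; for $(3)\Rightarrow(1)$ it shows that finiteness of $\phi_{t_0}\ast f$ at a single point $x_0$ propagates to finiteness of $\phi_{t_0/4}\ast f$ at every point, again through pointwise kernel comparisons, and then invokes duality. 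Your route is shorter and more transparent for $\mathbb T^n$, and it makes explicit (as the paper only records later, in Remark~\ref{fr2}) that $D_p^T$ coincides with the weight class in Proposition~\ref{GP}\eqref{prop2}. What the paper's longer route buys is portability: the same translation-comparison estimates $\phi_t(z-z')\lesssim\phi_{t/4}(z')$ carry over verbatim to the waveguide $\mathbb T^n\times\mathbb R^m$ (Proposition~\ref{prop-r1}), where the kernel is \emph{not} bounded below and your shortcut is unavailable.
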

\begin{remark}
In each of the statements in Proposition \ref{prop1}, the constants \(t_0\) are not identical, but they are proportional. This holds true for the subsequent results as well.
\end{remark}
\begin{proof} We shall prove that \eqref{1} implies \eqref{2}. 
By H{\"o}lder's inequality, for any $t>0,$ we have 
\begin{eqnarray}\label{eq1}
\| \phi_t \ast f\|^p_{L^p_u(\mathbb T^n)}
 & \leq &  \int_{\mathbb T^n} \left\|\phi_{t}(x-\cdot) v^{-\frac{1}{p}}(\cdot)\right\|_{L^{p^{\prime}}(\mathbb T^n)}^{p} \| v^{\frac{1}{p}} f\|^p_{L^p(\mathbb T^n)} u(x) dx\nonumber\\
 & = & \|  f\|^p_{L_v^p(\mathbb T^n)}  \int_{\mathbb T^n} g^t(x) u(x) dx,
\end{eqnarray}
where $ g^t(x)= \left\|\phi_{t}(x-\cdot) v^{-\frac{1}{p}}(\cdot)\right\|_{L^{p^{\prime}}(\mathbb T^n)}^{p}.$ We claim that  there exists $t_0>0$ such that $g^{t_0}(x)$ is finite for all $x$.  Now choosing \(u \in L^{1}(\mathbb T^n)\) such that $ g^{t_0} \cdot u$ is in \(L^{1}(\mathbb T^n)\).  Indeed,  if the function $g^t$ is unbounded and the set $\{x: g^t(x) \geq 1\}$ has nonzero measure, we define the function $u$ as follows: for $x$ in $\{x: g^{t_0}(x) \geq 1\}$, set $u(x) = \frac{1}{g^{t_0}(x)}$, and for all other points, set $u(x) = 0$. If the set $\{x: g^{t_0}(x) \geq 1\}$ has zero measure, or if $g^{t_0}$ is bounded, the natural choice for $u$ is a constant function.    Thus,   we have  $\|\phi_{t_0}\ast f\|_{L^p_u(\mathbb T^n)}  \lesssim \| f\|_{L^p_v(\mathbb T^n)}.$

Now, we must prove our claim.
    For a given  $x\in Q^n$ consider the ball \(B_{x}=\left \{y \in Q^n:|x-y| \leq|x| \right \}\). Consequently, for \(t>0\), we have
\begin{eqnarray}\label{ds0}
&\left\|\phi_{t}(x-\cdot) v^{-\frac{1}{p}}(\cdot)\right\|_{L^{p^{\prime}}(\mathbb T^n)}\\
&\leq\left\|\chi_{B_{x}}(\cdot) \phi_{t}(x-\cdot) v^{-\frac{1}{p}}(\cdot)\right\|_{L^{p^{\prime}}(\mathbb T^n)}+\left\|\chi_{B_{x}^{c}}(\cdot) \phi_{t}(x-\cdot) v^{-\frac{1}{p}}(\cdot)\right\|_{L^{p^{\prime}}(\mathbb T^n)}.\nonumber
\end{eqnarray}
If \(|x-y| \leq |x|\), then \(|y| \leq |x-y|+|x| \leq 2|x|\).   By (\ref{eq8}), for  \(|x-y| \leq |x|\),   we have
\begin{eqnarray}\label{ds1}
\phi_{t}(x-y) &   \leq  & C_n =  C_n e^{\frac{1}{4t}|x|^{2}} e^{-\frac{1}{4t}|x|^2} \leq C_n e^{\frac{1}{4t}|x|^{2}} e^{-\frac{1}{4t}\left(\frac{|y|}{2}\right)^{2}}  \nonumber \\
&\leq &  C_n(4\pi t)^{\frac{n}{2}} e^{\frac{1}{4t}|x|^{2}} \sum_{k \in \mathbb{Z}^n}e^{-\frac{1}{4t}\left(\frac{|y+k|}{2}\right)^{2}}
=  C_{x,t} \phi_{ \frac{t}{4}}(y),
\end{eqnarray}
where $C_n=2^n\left(1+\sqrt{\frac{\pi}{t}}\right)^{n}$ and $C_{x,t}= C_n(4\pi t)^{\frac{n}{2}}.$ 
On the other hand, if \(|x-y| > |x|\), then \(|y| \leq |x-y|+|x| < 2|x-y|\). Thus,
\begin{eqnarray}\label{ds3}
\phi_{t}(x-y)=\sum_{k \in \mathbb{Z}^n}e^{-\frac{1}{4t}|x-y+k|^{2}} \leq \phi_{ \frac{t}{4}}(y).
\end{eqnarray}
Choosing  \(t_1 = \frac{t_0}{4}\) and combining \eqref{ds0},  \eqref{ds1}, and \eqref{ds3} and using the hypothesis,  we  obtain
\[
\begin{aligned}
&\left\|\phi_{t}(x-\cdot) v^{-\frac{1}{p}}(\cdot)\right\|_{L^{p^{\prime}}(\mathbb T^n)}\\
&\quad\quad\quad\quad\quad\leq C_{x,t}\left\|\chi_{B_{x}} \phi_{t_1}v^{-\frac{1}{p}}\right\|_{L^{p^{\prime}}(\mathbb T^n)}+\left\|\chi_{B_{x}^{c}} \phi_{t_1} v^{-\frac{1}{p}}\right\|_{L^{p^{\prime}}(\mathbb T^n)}<\infty.
\end{aligned}
\]
This proves \eqref{1} implies \eqref{2}.    Part \eqref{2} implies \eqref{3} is obvious.   Suppose \eqref{3} holds true.
 Hence, for any positive function \(f \in L_v^{p}(\mathbb T^n)\), \(\phi_{t_{0}} * f(x) < \infty\) for almost every \(x\). Let \(x_{0} \in \mathbb{T}^{n}\) be such that \(\phi_{t_{0}} * f(x_{0}) < \infty\). 
 We will first show that \(\phi_{\frac{t_{0}}{4}} * f(x) < \infty\) for all \(x \in \mathbb{T}^{n}\).
Indeed, assume first that \(x \neq x_{0}\). 
Note that 
\begin{equation}\label{de1}
\begin{cases} |x-y| \leq |x-x_{0}| \implies\left|y-x_{0}\right| \leq 2\left|x-x_{0}\right|,\\
 |x-y| \geq |x-x_{0}| \implies  \left|x_{0}-y\right| \leq 2|x-y|.
\end{cases}
\end{equation}
By \eqref{de1} and Theorem \ref{crb},  we obtain
\begin{eqnarray*}\phi_{\frac{t_{0}}{4}}(x-y) \leq
\begin{cases}  C_n \frac{\phi_{t_{0}}\left(x_{0}-y\right)}{\phi_{t_{0}}\left(2\left(x-x_{0}\right)\right)} = C_{x} \phi_{t_{0}}\left(x_{0}-y\right) & if \  |x-y| \leq |x-x_{0}|, \\
 \phi_{\frac{t_{0}}{4}}\left(\frac{x_{0}-y}{2}\right) = 4^{n / 2} \phi_{t_{0}}\left(x_{0}-y\right) & if \ |x-y| \geq |x-x_{0}|,
 \end{cases}
\end{eqnarray*}
where $C_x=\frac{C_n}{\phi_{t_{0}}\left(2\left(x-x_{0}\right)\right)} .$ Thus,  we have 
\begin{eqnarray}\label{eq2}
\phi_{\frac{t_0}{4}} \ast f(x)
& \leq & \left(C_{x} + 4^{n / 2}\right) \left(\int_{|x-y|<|x-x_{0}|} + \int_{|x-y| \geq |x-x_{0}|}\right) \phi_{t_{0}}\left(x_{0}-y\right) f(y) d y\nonumber\\
& \lesssim & \phi_{t_0}\ast f (x_0)
 <  \infty  \quad for \ all\ \   x \in \mathbb{T}^{n} \backslash \{x_{0}\}.
\end{eqnarray}
Note that, later inequality in \eqref{de1} also hold for $x=x_0.$ 
That is
\[
0 \leq \int_{\mathbb{T}^{n}} \phi_{\frac{t_{0}}{4}}\left(x_{0}-y\right) f(y) d y \leq 4^{n / 2} \int_{\mathbb{T}^{n}} \phi_{t_{0}}\left(x_{0}-y\right) f(y) d y < \infty.
\]
Thus,   we have
\[\int_{\mathbb{T}^{n}} \phi_{\frac{t_{0}}{4}}(x-y) f(y) d y< \infty \quad for\ all\  \ x\in \mathbb T^n. \] In particular,

\[\int_{Q_n} \phi_{\frac{t_{0}}{4}}(y) f(y) d y< \infty \quad for \  all \   f\in L^p_v(\mathbb T^n).\] Therefore, by duality,  for $t_1=t_0/4,$ we 
\[ \left\|\phi_{t_1} v^{-\frac{1}{p}}\right\|_{L^{p^{\prime}}(\mathbb T^n)}= \sup_{\|f\|_{L^p_v(\mathbb T^n)} \leq 1} \left| \langle \phi_{t_1}v^{-1/p},   fv^{1/p} \rangle \right| < \infty. \]
This proves \eqref{1}.
\end{proof}

\begin{proposition}[characterization of   $D^{WG}_p$]\label{prop-r1}
	Let $v$ be a pisitive weight in $\mathbb{T}^n \times\mathbb{R}^m$and $ 1 \leq p<\infty.$
	Then the  following statements are equivalent:
	\begin{enumerate}
		\item\label{r5} $v\in D^{WG}_p$ for some $ 1 \leq p<\infty.$
		\item\label{r6} There exists $t_0>0$ and a weight $u$ such that the operator $f \rightarrow \phi^{WG}_{t_0} * f$ maps $L_v^p(\mathbb{T}^n \times\mathbb{R}^m)$ into $L_u^p(\mathbb{T}^n \times\mathbb{R}^m)$ for $p>1,$
		and maps $L_v^1(\mathbb{T}^n \times\mathbb{R}^m)$ into weak $L_u^1(\mathbb{T}^n \times\mathbb{R}^m)$ when $p=1$.
		\item\label{r7} There exists $t_0>0$ such that $\phi^{WG}_{t_0} * f(x)$ is finite almost everywhere for all $f \in L_v^p(\mathbb{T}^n \times\mathbb{R}^m)$.
	\end{enumerate}
\end{proposition}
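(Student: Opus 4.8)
The plan is to transcribe the proof of Proposition~\ref{prop1}, replacing the toroidal heat kernel by $\phi^{WG}_t$ and the pointwise bound \eqref{eq8} by \eqref{eq-r17}, while taking care of the two places at which the non-compactness of $\mathbb R^m$ intervenes. The preliminary observation is a comparability estimate: since $|x|\le\tfrac{\sqrt n}{2}$ on $Q_n\times\mathbb R^m$, \eqref{eq-r17} gives, for every $t>0$,
\[
c_{n,t}\,e^{-\frac{|y|^2}{4t}}\ \le\ \phi^{WG}_t(x,y)\ \le\ C_{n,t}\,e^{-\frac{|y|^2}{4t}}\qquad\big((x,y)\in Q_n\times\mathbb R^m\big).
\]
Hence $v\in D^{WG}_p$ if and only if $v^{-1/p}\,\phi^{WG}_{t_0}\in L^{p'}(\mathbb T^n\times\mathbb R^m)$ for some $t_0>0$, which is the exact analogue of $D^T_p$ and is the form of the hypothesis I will use. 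Note, however, that whereas $D^T_p$ just amounts to $v^{-1/p}\in L^{p'}(\mathbb T^n)$ (because $\phi_t$ is bounded above and below by positive constants on the compact torus), membership in $D^{WG}_p$ genuinely allows $v^{-1/p}$ to grow at infinity, so the kernel splitting below is essential rather than cosmetic.

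\textbf{\eqref{r5}$\Rightarrow$\eqref{r6}.} As in \eqref{eq1}, H\"older's inequality gives
\[
\|\phi^{WG}_t*f\|_{L^p_u(\mathbb T^n\times\mathbb R^m)}^p\ \le\ \|f\|_{L^p_v(\mathbb T^n\times\mathbb R^m)}^p\int_{\mathbb T^n\times\mathbb R^m}g^t(z)\,u(z)\,dz,\qquad g^t(z):=\big\|\phi^{WG}_t(z-\cdot)\,v^{-1/p}(\cdot)\big\|_{L^{p'}}^p,
\]
and the task is to show that $g^{t_0}$ is finite at every point (for $t_0$ proportional to the parameter in the hypothesis); then $u(z):=\min\{1,g^{t_0}(z)^{-1}\}\,e^{-|z|^2}$ is a strictly positive weight with $\int g^{t_0}u\le\int e^{-|z|^2}\,dz<\infty$, which yields $\|\phi^{WG}_{t_0}*f\|_{L^p_u}\lesssim\|f\|_{L^p_v}$. (The Gaussian factor replaces the trivial choice ``$u\equiv$ const'' of the torus case, which is no longer integrable over $\mathbb T^n\times\mathbb R^m$.) To prove $g^{t_0}(z)<\infty$, fix $z=(x,y)$ and split the integration domain into $B_z=\{w:|z-w|\le|z|\}$ and its complement, just as with $B_x$ in Proposition~\ref{prop1}: on $B_z$ one has $|w|\le 2|z|$, and \eqref{eq-r17} gives $\phi^{WG}_t(z-w)\le C_{z,t}\,\phi^{WG}_{ct}(w)$ for a fixed $c$ (e.g.\ $c=\tfrac14$), as in \eqref{ds1}; off $B_z$ one has $|w|<2|z-w|$, and \eqref{eq-r17} gives $\phi^{WG}_t(z-w)\lesssim\phi^{WG}_{ct}(w)$, as in \eqref{ds3}. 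Combining,
\[
g^t(z)\ \lesssim_z\ \big\|\chi_{B_z}\,\phi^{WG}_{ct}\,v^{-1/p}\big\|_{L^{p'}}+\big\|\chi_{B_z^c}\,\phi^{WG}_{ct}\,v^{-1/p}\big\|_{L^{p'}}\ <\ \infty
\]
by the hypothesis $v^{-1/p}\,\phi^{WG}_{ct}\in L^{p'}$. The case $p=1$ ($p'=\infty$) is identical, with the $L^{p'}$ norms replaced by $L^\infty$ norms and the comparability estimate used on the bounded set $B_z$ to pass from $\|\phi^{WG}_{ct}v^{-1}\|_{L^\infty(B_z)}$ to $\|v^{-1}e^{-|y|^2/(4ct)}\|_{L^\infty}$.

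\textbf{\eqref{r6}$\Rightarrow$\eqref{r7}} is immediate. For \textbf{\eqref{r7}$\Rightarrow$\eqref{r5}}, fix a positive $f\in L^p_v$ and a point $z_0$ with $\phi^{WG}_{t_0}*f(z_0)<\infty$; using the geometric dichotomy (the analogue of \eqref{de1})
\[
|z-w|\le|z-z_0|\ \Longrightarrow\ |w-z_0|\le 2|z-z_0|,\qquad |z-w|\ge|z-z_0|\ \Longrightarrow\ |z_0-w|\le 2|z-w|,
\]
together with \eqref{eq-r17}, one bounds $\phi^{WG}_{ct_0}(z-w)$ by a $z$-dependent multiple of $\phi^{WG}_{t_0}(z_0-w)$ on each region, so that $\phi^{WG}_{ct_0}*f(z)\lesssim_z\phi^{WG}_{t_0}*f(z_0)<\infty$ for every $z$ (the second alternative alone covers $z=z_0$). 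Evaluating at $z=0$ and using that $\phi^{WG}_{ct_0}$ is even, $\int_{\mathbb T^n\times\mathbb R^m}\phi^{WG}_{ct_0}(w)f(w)\,dw<\infty$ for every $f\in L^p_v$; by duality $\phi^{WG}_{ct_0}\,v^{-1/p}\in L^{p'}(\mathbb T^n\times\mathbb R^m)$, and the lower bound in the comparability estimate then gives $v^{-1/p}e^{-|y|^2/(4ct_0)}\in L^{p'}$, i.e.\ $v\in D^{WG}_p$.

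\textbf{Main obstacle.} Everything above is driven by two bookkeeping consequences of $\mathbb R^m$ being non-compact, and these are the only real differences from Proposition~\ref{prop1}. First, the constants $C_{z,t}$ in the kernel comparisons grow with $|z|$ (exponentially in $|y|$), so one obtains only pointwise --- not uniform --- finiteness of $g^{t_0}$; this is exactly why the output weight $u$ must carry an extra Gaussian decay and why one can only assert the existence of \emph{some} admissible $u$. Second, the hypothesis must first be converted from the intrinsic form of $D^{WG}_p$, stated via $e^{-|y|^2/(4t_0)}$, to the convolution-kernel form $v^{-1/p}\phi^{WG}_{t_0}\in L^{p'}$, which is the role of the comparability estimate. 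With these two points in hand, the rest is a verbatim transcription of the proof of Proposition~\ref{prop1}.
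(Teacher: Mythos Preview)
Your proof is correct and follows essentially the same route as the paper's own argument: the same H\"older reduction to finiteness of $g^t$ (the paper's $h^t$), the same near/far splitting via $B_z$ and $B_z^c$ with the kernel comparisons driven by \eqref{eq-r17}, the same Gaussian-damped choice of $u$, and the same duality argument for \eqref{r7}$\Rightarrow$\eqref{r5}. Your explicit comparability estimate $\phi^{WG}_t(x,y)\asymp_{n,t} e^{-|y|^2/4t}$ and your discussion of the two non-compactness issues are, if anything, more transparent than the paper's treatment, but the underlying method is identical.
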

\begin{proof} Since the proof closely follows the proof of the Proposition \ref{prop1}, we provide only a brief outline here.
We shall prove \eqref{r5} implies \eqref{r6}. Similarly in \eqref{eq1}, using H{\"o}lder's inequality, we obtain
	\begin{eqnarray*}
		\| \phi^{WG}_t \ast f\|^p_{L^p_u\left(\mathbb{T}^n \times\mathbb{R}^m\right)} 
		\leq \|  f\|^p_{L_v^p\left(\mathbb{T}^n \times\mathbb{R}^m\right)}  \int_{\mathbb{T}^n \times\mathbb{R}^m} h^t(x,y) u(x,y)\, dx\,dy,
	\end{eqnarray*}
	where $ h^t(x,y)= \left\|\phi^{WG}_{t}((x,y)-(\cdot,\cdot)) v^{-\frac{1}{p}}(\cdot,\cdot)\right\|_{L^{p^{\prime}}(\mathbb{T}^n \times\mathbb{R}^m)}^{p}.$ We claim that  there exists $t_0>0$ such that $h^{t_0}(x,y)$ is finite for all $(x,y)$.  By choosing \(u \in L_{\text{loc}}^{1}(\mathbb{T}^n \times \mathbb{R}^m)\) as
$$
u(x,y)=\left\{\begin{array}{l}
	\quad1 \quad\quad\quad if \ h^{t_0}(x, y) \leq e^{-|(x, y)|^2}, \\
	\frac{e^{-|(x,y)|^2}}{h^{t_0}(x,y)}\quad if\  h^{t_0}(x, y)>e^{-|(x, y)|^2} ,
\end{array}\right.
$$	
we can get \(h^{t_0} \cdot u\) is in \(L^{1}(\mathbb{T}^n \times \mathbb{R}^m)\) and conclude \eqref{r6}.  We now prove our claim.
	For $z:=(x,y)\in \mathbb{T}^n \times\mathbb{R}^m,$ consider
	\[B_{z}:=\left \{z':=(x',y') \in \mathbb{T}^n \times\mathbb{R}^m:|z-z'| \leq|z| \right \}.\]
In \(\mathbb{T}^n \times \mathbb{R}^m\), the modulus is equivalent to the modulus in \(\mathbb{R}^{n+m}\). Consequently, for \(t>0\), we have
	\begin{eqnarray}\label{eq-r23}
		&\left\|\phi^{WG}_{t}(z-\cdot) v^{-\frac{1}{p}}(\cdot)\right\|_{L^{p^{\prime}}(\mathbb{T}^n \times\mathbb{R}^m)} \\
		&\leq\left\|\chi_{B_{z}}(\cdot) \phi^{WG}_{t}(z-\cdot) v^{-\frac{1}{p}}(\cdot)\right\|_{L^{p^{\prime}}(\mathbb{T}^n \times\mathbb{R}^m)}+\left\|\chi_{B_{z}^{c}}(\cdot) \phi^{WG}_{t}(z-\cdot) v^{-\frac{1}{p}}(\cdot)\right\|_{L^{p^{\prime}}(\mathbb{T}^n \times\mathbb{R}^m)}.\nonumber
	\end{eqnarray} 
	If \(|z-z'| \leq |z|\), then \(|z'| \leq |z-z'|+|z| \leq 2|z|\). Following \eqref{ds1} and using (\ref{eq-r17}), we can have
	\begin{equation}\label{eq-r24}
		\phi^{WG}_{t}(z-z') 
		\lesssim e^{\frac{1}{4t}|z|^{2}} \sum_{k \in \mathbb{Z}^n}e^{-\frac{1}{4t}\left\{\left(\frac{|x'+k|}{2}\right)^{2}+\left(\frac{|y'|}{2}\right)^{2}\right\}}
		\lesssim \phi^{WG}_{ \frac{t}{4}}(z').
	\end{equation}
	On the other hand, if \(|z-z'| > |z|\), then \(|z'| \leq |z-z'|+|z| < 2|z-z'|\). Thus,
	\begin{eqnarray}\label{eq-r25}
		\phi^{WG}_{t}(z-z')\leq \phi^{WG}_{ \frac{t}{4}}(z').
	\end{eqnarray}
	Choosing  \(t_1 = \frac{t_0}{4}\) and combining \eqref{eq-r23},  \eqref{eq-r24}, and \eqref{eq-r25} and using the hypothesis,  we  obtain
	\[
	\begin{aligned}
		&\left\|\phi^{WG}_{t}(z-\cdot) v^{-\frac{1}{p}}(\cdot)\right\|_{L^{p^{\prime}}(\mathbb{T}^n \times\mathbb{R}^m)}\\
		&\quad\quad\quad\quad\quad\leq C_{z,t}\left\|\chi_{B_{z}} \phi^{WG}_{t_1}v^{-\frac{1}{p}}\right\|_{L^{p^{\prime}}(\mathbb{T}^n \times\mathbb{R}^m)}+\left\|\chi_{B_{z}^{c}} \phi^{WG}_{t_1} v^{-\frac{1}{p}}\right\|_{L^{p^{\prime}}(\mathbb{T}^n \times\mathbb{R}^m)}<\infty.
	\end{aligned}
	\]
	This proves \eqref{r5} implies \eqref{r6}. Part \eqref{r6} implies \eqref{r7} is obvious.   Suppose \eqref{r7} holds true. Let \(z_{0} \in \mathbb{T}^n \times\mathbb{R}^m\) be such that \(\phi^{WG}_{t_{0}} * f(z_{0}) < \infty\). 
	Let \(z \neq z_{0}\), then
	\begin{equation}\label{eq-r26}
		\begin{cases} |z-z'| \leq |z-z_{0}| \implies\left|z'-z_{0}\right| \leq 2\left|z-z_{0}\right|,\\
			|z-z'| \geq |z-z_{0}| \implies  \left|z_{0}-z'\right| \leq 2|z-z'|.
		\end{cases}
	\end{equation}
	By \eqref{eq-r26} and \eqref{eq-r17},  we obtain
	\begin{eqnarray*}\phi^{WG}_{\frac{t_{0}}{4}}(z-z') \lesssim
		\begin{cases}   \frac{\phi^{WG}_{t_{0}}\left(z_{0}-z'\right)}{\phi^{WG}_{t_{0}}\left(2\left(z-z_{0}\right)\right)} \lesssim \phi^{WG}_{t_{0}}\left(z_{0}-z'\right) & if \  |z-z'| \leq |z-z_{0}|, \\
			\phi^{WG}_{\frac{t_{0}}{4}}\left(\frac{z_{0}-z'}{2}\right) \lesssim \phi^{WG}_{t_{0}}\left(z_{0}-z'\right) & if \ |z-z'| \geq |z-z_{0}|.
		\end{cases}
	\end{eqnarray*}
	Thus,  following \eqref{eq2}, we have 
	\begin{eqnarray*}
		\phi^{WG}_{\frac{t_0}{4}} \ast f(z) 
		 \lesssim\phi^{WG}_{t_0}\ast f (z_0)
		<   \infty  \quad for\ all \ \   z \in ( \mathbb{T}^n \times\mathbb{R}^m)\setminus \{z_{0}\}.
	\end{eqnarray*}
	Since the inequality in \eqref{eq-r26} also hold for $z=z_0,$  we have 
	\[\int_{\mathbb{T}^n \times\mathbb{R}^m} \phi^{WG}_{\frac{t_{0}}{4}}(z-z') f(z') d z'< \infty \quad for\ all \  \ z\in \mathbb{T}^n \times\mathbb{R}^m. \] In particular,
	\[\int_{\mathbb{T}^n \times\mathbb{R}^m} \phi^{WG}_{\frac{t_{0}}{4}}(z') f(z') d z'< \infty \quad for \  all \   f\in L^p_v(\mathbb{T}^n \times\mathbb{R}^m).\] Therefore, by duality,  for $t_1=t_0/4,$ we 
	\[ \left\|\phi^{WG}_{t_1} v^{-\frac{1}{p}}\right\|_{L^{p^{\prime}}(\mathbb{T}^n \times\mathbb{R}^m)}= \sup_{\|f\|_{L^p_v(\mathbb{T}^n \times\mathbb{R}^m)} \leq 1} \left| \langle \phi^{WG}_{t_1}v^{-1/p},   fv^{1/p} \rangle \right| < \infty. \]
	This proves \eqref{r5}.
\end{proof}

\subsection{Proof of Theorem \ref{th1}}
It should be noted that Theorem \ref{th1} is a particular case of  following more generic Theorem \ref{th2}.
\begin{theorem}\label{th2}
Let $v$ be a positive weight in $\mathbb{T}^n,$  $\phi_t$ be as in \eqref{ehk} and $1 \leq p<\infty.$ Define the local maximal operator by
\[
H^*_Rf(x) = \sup_{0<t<R}\left|\phi_t\ast f(x)\right|
\]
for some $0<R<\frac{1}{2}$. Then the  following statements are equivalent:
\begin{enumerate}
\item \label{t1} The weight $v\in D_p^{T}$ for some $ 1 \leq p<\infty.$
\item\label{t2} There exists \(0 < R < \frac{1}{2}\) and a weight \(u\) such that the operator \(H^*_Rf\) maps \(L_v^p(\mathbb T^n)\) into \(L_u^p(\mathbb T^n)\) for \(p > 1,\) and maps \(L_v^1(\mathbb T^n)\) into weak \(L_u^1(\mathbb T^n)\) when \(p=1\).
\item\label{t3} There exists \(0 < R < \frac{1}{2}\) such that the convolution \(\phi_{R} * f(x)\) is finite almost everywhere, and the limit \(\lim_{t \rightarrow 0^+} \phi_{t} * f(x) = f(x)\) exists almost everywhere for all \(f \in L_v^p(\mathbb{T}^n)\).
\item\label{t4} There exists \(0 < R < \frac{1}{2}\) such that \(H^*_Rf(x)\) is finite almost everywhere for all \(f \in L_v^p(\mathbb T^n)\).
\end{enumerate}
\end{theorem}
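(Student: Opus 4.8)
The plan is to prove the equivalences through the cycle $(1)\Rightarrow(2)\Rightarrow(4)\Rightarrow(1)$, supplemented by $(1)\Rightarrow(3)\Rightarrow(4)$; the analytic heart is $(1)\Rightarrow(2)$ and the a.e.\ convergence in $(1)\Rightarrow(3)$, while the remaining links are short. For $(2)\Rightarrow(4)$: boundedness (or weak boundedness) of $H^{*}_{R}\colon L^p_v(\mathbb T^n)\to L^p_u(\mathbb T^n)$ forces $H^{*}_{R}f<\infty$ $u$-a.e., hence Lebesgue-a.e.\ since $u>0$. For $(4)\Rightarrow(1)$: with $t_0=R/2<R$ one has $|\phi_{t_0}\ast f(x)|\le H^{*}_{R}f(x)<\infty$ a.e., so $v\in D^T_p$ by the implication $(3)\Rightarrow(1)$ of Proposition~\ref{prop1}. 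For $(3)\Rightarrow(4)$: for $f\in L^p_v(\mathbb T^n)$ the map $t\mapsto\phi_t\ast f(x)$ is continuous on $(0,R]$ (dominated convergence, using joint continuity and local boundedness of $\phi_t$ in $t$), and by $(3)$ it has a finite limit at $0^+$ and a finite value at $R$ for a.e.\ $x$; being continuous on a compact interval it is bounded there, so $H^{*}_{R}f(x)<\infty$ a.e.

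For $(1)\Rightarrow(2)$ I would carry out the kernel splitting indicated in the introduction. On the compact torus $\phi_{t_0}$ is continuous with $0<c\le\phi_{t_0}\le C<\infty$, so $v\in D^T_p$ is equivalent to $v^{-1/p}\in L^{p'}(\mathbb T^n)$ (to $v^{-1}\in L^\infty$ when $p=1$), regardless of $t_0$; in particular $L^p_v(\mathbb T^n)\subseteq L^1(\mathbb T^n)$ by H\"older's inequality. Fix $R<\tfrac12$ and write $\phi_t=\phi_t\chi_{B(0,\rho)}+\phi_t\chi_{Q_n\setminus B(0,\rho)}$ with $\rho\le\tfrac12$, so that every ball $B(x,\rho)$ with $x\in Q_n$ lies inside a single period of the periodic extension. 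On $B(0,\rho)$ the upper bound \eqref{eq8} dominates $\phi_t$ by the radially decreasing kernel $2^n(1+\sqrt{\pi/t})^n e^{-|x|^2/4t}\chi_{B(0,\rho)}$, whose $L^1(\mathbb R^n)$ mass $2^n(1+\sqrt{\pi/t})^n(4\pi t)^{n/2}$ stays bounded for $0<t<\tfrac12$; the classical domination of a convolution against a radially decreasing kernel by the local Hardy--Littlewood maximal operator then yields $\sup_{0<t<R}\bigl|(\phi_t\chi_{B(0,\rho)})\ast f\bigr|\lesssim\mathcal M^T f$. On the complementary set $\rho\le|x|\le\tfrac{\sqrt n}{2}$ the monotone behaviour of $t\mapsto\phi_t(x)$ over the admissible time window gives $\phi_t(x)\lesssim\phi_R(x)$ for $0<t<R$, hence $\sup_{0<t<R}\bigl|(\phi_t\chi_{Q_n\setminus B(0,\rho)})\ast f\bigr|\lesssim\phi_R\ast|f|$. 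Together these give the pointwise bound $H^{*}_{R}f\lesssim\mathcal M^T f+\phi_R\ast|f|$ (this is \eqref{eq-r2}). Finally, Proposition~\ref{GP}\,\eqref{prop2} (applicable since $v^{-1/p}\in L^{p'}$, resp.\ $v^{-1}\in L^\infty$) provides a weight $u_1$ with $\mathcal M^T\colon L^p_v\to L^p_{u_1}$ (resp.\ into weak $L^1_{u_1}$), and the implication $(1)\Rightarrow(2)$ of Proposition~\ref{prop1} provides a weight $u_2$ with $f\mapsto\phi_R\ast f\colon L^p_v\to L^p_{u_2}$ (resp.\ into weak $L^1_{u_2}$); with $u=\min(u_1,u_2)>0$ and a splitting of superlevel sets one obtains $(2)$.

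For the convergence in $(1)\Rightarrow(3)$ I would use a Banach-principle argument. Bounded functions are dense in $L^p_v(\mathbb T^n)$ (truncate $f$ and apply dominated convergence against $|f|^pv\in L^1$), and for bounded $g\in L^\infty(\mathbb T^n)\subseteq L^1(\mathbb T^n)$ the classical fact \eqref{limpt} (Theorem~8.15 in \cite{raFolland}) gives $\phi_t\ast g\to g$ a.e. Given $f\in L^p_v$ and $\delta>0$, write $f=g+h$ with $g$ bounded and $\|h\|_{L^p_v}<\delta$. Then a.e.\ $\limsup_{t\to0^+}|\phi_t\ast f-f|\le H^{*}_{R}h+|h|$, so
\[
\bigl\{\,\limsup_{t\to0^+}|\phi_t\ast f-f|>\lambda\,\bigr\}\subseteq\{H^{*}_{R}h>\lambda/2\}\cup\{|h|>\lambda/2\}.
\]
The weak boundedness of $H^{*}_{R}$ just established (together with Chebyshev) bounds the $u$-measure of the first set by $\lesssim\lambda^{-p}\delta^p$, while $h\in L^1(\mathbb T^n)$ gives $\phi_t\ast h\to h$ a.e., hence $|h|\le H^{*}_{R}h$ a.e., so the second set is absorbed into the first. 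Letting $\delta\to0$ forces the $\limsup$ to vanish a.e., i.e.\ $\phi_t\ast f\to f$ a.e.; and $\phi_R\ast f$ is finite everywhere since $f\in L^1$ and $\phi_R\in L^\infty(\mathbb T^n)$. (Since $L^p_v\subseteq L^1$, one could instead apply \eqref{limpt} directly to $f$; I keep the maximal-function route, which is the one that transfers to the waveguide setting.)

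The step I expect to be the real obstacle is $(1)\Rightarrow(2)$: engineering the kernel split so that the ball piece is genuinely controlled by the \emph{torus} maximal operator $\mathcal M^T$ (which requires the balls of radius $\le\tfrac12$ appearing in $\mathcal M_{1/2}$ to sit inside one period of the periodic extension), and rigorously establishing $\phi_t(x)\lesssim\phi_R(x)$ on the annulus $\rho\le|x|\le\tfrac{\sqrt n}{2}$ — precisely where the torus heat kernel deviates from the Gaussian and where the two-sided bounds of Theorem~\ref{crb} are essential. A secondary technical nuisance is running everything uniformly in $p$, in particular the endpoint $p=1$ (weak-type estimates throughout, combined under the single weight $u=\min(u_1,u_2)$).
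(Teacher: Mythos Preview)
Your proposal is correct and follows essentially the same approach as the paper: the heart is the kernel splitting $\phi_t=\phi_t\chi_{B(0,\rho)}+\phi_t\chi_{Q_n\setminus B(0,\rho)}$ with the ball piece controlled by $\mathcal M^T$ via the upper bound in Theorem~\ref{crb} and the annular piece controlled by $\phi_R*|f|$ via monotonicity in $t$, after which Propositions~\ref{GP} and~\ref{prop1} supply the weights. The only cosmetic differences are that the paper runs the cycle $(1)\Rightarrow(2)\Rightarrow(3)\Rightarrow(4)\Rightarrow(1)$ (carrying out the dyadic decomposition \eqref{eq-r4}--\eqref{eq3} explicitly rather than citing the radially-decreasing-kernel lemma), and for $(3)\Rightarrow(4)$ it uses the kernel comparison $\phi_t\le (R/t_{x,f})^{n/2}\phi_R$ on $[t_{x,f},R]$ instead of your continuity argument; your observations that $D^T_p$ reduces to $v^{-1/p}\in L^{p'}$ on the compact torus and that consequently $L^p_v\subseteq L^1$ (making \eqref{limpt} applicable directly) are exactly what the paper records in Remarks~\ref{rk1} and~\ref{fr2}.
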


\begin{proof}
Assume that \eqref{t1} holds. Let \( 0<t < R\). Decompose  \(\phi_t\) into two parts as follows
 \[\phi_t = \phi_t^1 + \phi_t^2,  \quad where \  \phi_t^1 = \phi_t \chi_{\{|x| \leq(2nR)^{1/2}\}}.\]
If \(j_0 \in \mathbb{Z}\) is such that \(2^{j_0}t < R < 2^{j_0+1}t\), then we have
\begin{equation}\label{eq-r4}
\phi_t^1(x)\leq \phi_t(x)\left(\chi_{\{|x| \leq r_{0}\}}(x)+\sum_{j=0}^{j_0} \chi_{\left\{(r_j \leq |x| \leq r_{j+1}\right\}}(x)\right),
\end{equation}
where we denote \(r_j = (2n2^{j}t)^{\frac{1}{2}}\) for $0\leq j\leq j_0$. Using the right-hand side inequality of (\ref{eq8}), we can bound \(\phi_t(x)\) as follows
\[
\phi_t(x) \leq 2^n \left(1+\sqrt{\frac{\pi}{t}}\right)^n e^{-\frac{|x|^2}{4t}} \leq \frac{C_n}{t^{\frac{n}{2}}} e^{-\frac{|x|^2}{4t}},
\]
where \( C_n = 2^n \left(\sqrt{R} + \sqrt{\pi}\right)^n \). Hence, (\ref{eq-r4}) can be rewritten as
\[\phi_t^1(x)\leq\frac{C_n}{t^{\frac{n}{2}}}e^{-\frac{|x|^2}{4t}}\left(\chi_{\{|x| \leq r_{0}\}}(x)+\sum_{j=0}^{j_0} \chi_{\left\{r_j \leq |x| \leq r_{j+1}\right\}}(x)\right).\]
Using the fact that \( e^{-\frac{|x|^2}{4t}} \) is decreasing in the variable \( x \), we can rearrange it as
\[\phi_t^1(x)\leq C_n\left((2n)^{\frac{n}{2}}\frac{1}{(r_{0})^n} \chi_{\{B(0,r_{0})\}}(x)+\sum_{j=0}^{j_0} C_{n,j}\frac{1}{(r_{j+1})^{n}} \chi_{\left\{B(0,r_{j+1})\right\}}(x)\right),\]
where we denote $C_{n,j}=\left(2n2^{j+1}\right)^{\frac{n}{2}} e^{-\frac{n}{2} 2^{j}}$ for $0\leq j\leq j_0.$ By separately considering the positive and negative parts of \( f \in \mathbb{T}^n \), we can, without loss of generality, assume that \( f \geq 0 \). Let \( f^* \) denote its 1-periodic extension. Initially, by convolving \( f ^*\) on both sides and subsequently taking the supremum with respect to \( t \) on both sides, we then utilize definition (\ref{eq-r5}) to obtain
\begin{equation}\label{eq3}
\sup _{t<R} \phi_{t}^{1} * f^*(x)\leq C_n\left((2n)^{\frac{n}{2}} \mathcal{M}_{(2 n R)^{1 / 2}} f^*(x)+\sum_{j=0}^{j_0}C_{n,j}\,\mathcal{M}_{(2 n R)^{1 / 2}} f^*(x)\right).
\end{equation}
Hence, we get
\[
\sup _{t<R} \phi_{t}^{1} * f^*(x) \leq C'_{n} \mathcal{M}_{(2 n R)^{1 / 2}} f^*(x),
\]
where 
\[ C'_{n} = C_n\left((2n)^{\frac{n}{2}}+\sum_{j=0}^{j_0}C_{n,j}\right) < \infty.\]
Therefore, by selecting \( R < \frac{1}{8n} \), we obtain
\begin{equation}\label{eq-r2}
	\sup _{t<R} \phi_{t}^{1} * f^*(x) \leq C'_{n} \mathcal{M}_{\frac{1}{2}} f^*(x).
\end{equation}
For the case of $\phi^2_t$, if we will show that  \(\phi_{t}^{2}(x)\) is increasing in the time interval \((0, R')\)  for some $R'>0$ then, we have
\begin{equation}\label{eq12}
\sup _{0<t<R'} \phi_{t}^{2} * f(x) = \phi_{R'}^{2} * f(x) \leq \phi_{R'} * f(x).
\end{equation}
Hence, by selecting \( R = \min\{\frac{1}{8n}, R'\} \), from equations (\ref{eq-r2}) and (\ref{eq12}), we get
\begin{equation}\label{eq13}
H_{R}^{*} f(x) \lesssim\left(\mathcal{M}^{T} f(x)+\phi_{R} * f(x)\right).
\end{equation}
Let \( u_1 \) be a weight such that Proposition \ref{prop1},\eqref{2} holds. Then the same will also hold for any weight \( u' \) such that \( u'(x) \leq u_1(x) \). Similarly, the boundedness of the maximal operator will hold if the weight is \( u''(x) \leq u_2(x) \) whenever \( u_2 \) satisfies Proposition \ref{GP},\eqref{prop2}. Hence, we can choose \( u \) as \( u(x) = \min \{u_1(x), u_2(x)\} \) so that the condition \eqref{t2} is satisfied.

Now, we aim to show that \( \phi_{t}^{2}(x) \) is increasing in the time interval $(0, R')$ for some \( R' > 0 \). First, we will prove that each $k$-th term \( \frac{1}{t^{\frac{n}{2}}}e^{-\frac{|x+k|^2}{4t}} \) of \( \phi_{t}^{2}(x) \) increases for some \( R'_k \) depending on \( k\). Then, by setting \( R' = \inf_k \{R'_k\} >0\), \( \phi_{t}^{2}(x) \) will be increasing. Consider
\[
\frac{d}{dt} \left( \frac{1}{t^{\frac{n}{2}}}e^{-\frac{|x+k|^2}{4t}} \right) = \frac{1}{t^{\frac{n}{2}+2}}e^{-\frac{|x+k|^2}{4t}} \left( \frac{|x+k|^2 - 2nt}{2t} \right).\]
Thus, we need \( \frac{|x+k|^2 - 2nt}{2t} > 0 \), which simplifies to \( t < \frac{|x+k|^2}{2n} \). Therefore, we can choose $R'=R<\frac{1}{2},$ since $\phi_t^2$ is supported on $\chi_{\{|x| >(2nR)^{1/2}\}}.$   This proves that \eqref{t1} implies \eqref{t2}.

Next, we show that \eqref{t2} implies \eqref{t3}. First, we will proving \(\phi_{t}\ast f \rightarrow f\) uniformly for compactly supported continuous functions \(f\). We can select \(\delta>0\) such that \(|y|<\delta\) implies \(|f(x-y)-f(x)|<\epsilon\).  Since,
\[
\begin{aligned}
\left\|\phi_{t}\right\|_{L^{1}(\mathbb T^n)} =\sum_{k \in \mathbb{Z}^n} \int_{\mathbb T^{n}}(4\pi t)^{-\frac{n}{2}} e^{-\frac{|x+k|^{2}}{4 t}} dx
=\int_{\mathbb{R}^{n}}(4\pi t)^{-\frac{n}{2}} e^{-\frac{|x|^{2}}{4t}} dx =1,
\end{aligned}
\]
we can express
\[
\begin{aligned}
	\left|f * \phi_{t}-f\right| &\leq \int_{|y|<\delta} \phi_{t}(y)|f(x-y)-f(y)| d y+\int_{|y|\geq\delta} \phi_{t}(y)|f(x-y)-f(y)| d y \\
	& \leq \varepsilon+2 B \int_{\delta \leqslant|y|}\left|\phi_{t}(y)\right| d y,
\end{aligned}
\]
where \(B\) denotes the bound of \(f\). If we prove that for every \(\delta>0\),
\begin{equation}\label{eq-r6}
\int_{\delta \leq|x|}\phi_t(x) d x \rightarrow 0, \ as \ t \rightarrow 0^+,
\end{equation}
then we can get our claim. Since we are interested in pointwise convergence at \( t = 0 \), we can restrict \( t < R<\frac{1}{2} .\) Now, using the upper bound inequality from (\ref{eq8}), we have
\begin{equation}\label{eq-r7}
	\int_{\delta\leq|x|} \phi_{t}(x) d x \leq(4\pi)^{\frac{n}{2}}\left(\sqrt{R}+\sqrt{\pi}\right)^n \int_{\delta\leq|x|}(4\pi t)^{-\frac{n}{2}} e^{\frac{-|x|^2}{4 t}}dx.
\end{equation}
Since \( \phi_t'(x) =(4\pi t)^{-\frac{n}{2}} e^{-\frac{|x|^2}{4 t}} \) forms a good kernel, it satisfies
\begin{equation}\label{eq-r8}
\int_{\delta\leq|x|} \phi_{t}^{\prime}(x) d x \rightarrow 0, \ as \ t \rightarrow 0^+.
\end{equation}
Therefore, from (\ref{eq-r7}) and (\ref{eq-r8}), we conclude (\ref{eq-r6}).
We have shown that when $f$ is a compactly supported continuous function, $\phi_{t} * f(x) \rightarrow f(x)$ uniformly as $t \rightarrow 0^+$. Now, let \(f \in L_v^p(\mathbb{T}^n),\) where \(1 \leq p < \infty\). Utilizing the fact that \(\left\|\phi_t * f - f\right\|_{L^p_v(\mathbb T^n)} \rightarrow 0\) as \(t \rightarrow 0^+\), we can get a sequence such that \(f_{t_k} \rightarrow f\) pointwise almost everywhere. Hence, it suffices to show that \(\lim_{t \rightarrow 0^+} \phi_t * f(x)\) exists almost everywhere.
Let's denote
$$\Phi f(x) = \left| \limsup_{t \rightarrow 0^+} \phi_t * f(x) - \liminf_{t \rightarrow 0^+} \phi_t * f(x) \right|.$$
If \(f\) is continuous with compact support, then \(\phi_t * f \rightarrow f\) uniformly, and thus \(\Phi f\) is zero.
Next, if \(f\) is in \(L^p_v(\mathbb{T}^n)\), then by $L^p_v(\mathbb T^n)$ to  weak $L^p_u(\mathbb T^n)$  boundedness of $H^*_R$, we have
$$
m\{x : 2 H^*_Rf > \lambda\} \leq \frac{2C^p}{\lambda} \|f\|_{L^p_v(\mathbb T^n)}.
$$
However, we have
$$
\Phi f(x) \leq 2 H^*_Rf(x),
$$
Hence,
$$
m\{x : \Phi f(x) > \lambda\} \leq \frac{2C^p}{\lambda} \|f\|_{L^p_v(\mathbb T^n)}.
$$
Since continuous compactly supported functions are dense in \(L_v^p(\mathbb{T}^n)\), we can express \(f = f_1 + f_2\) with \(f_1\) being continuously compactly supported and \(L_v^p\) norm of \(f_2\) is arbitrarily small. However, \(\Phi f \leq \Phi f_1 + \Phi f_2\), and \(\Phi f_1 = 0\) since \(f_1\) is continuous. Therefore,
$$
m\{x : \Phi f(x) > \lambda\} \leq \frac{2C^p}{\lambda} \|f_2\|_{L^p_v(\mathbb T^n)}.
$$
Since the norm of \(f_2\) can be arbitrarily small, we conclude \(\Phi f = 0\) almost everywhere, implying that \(\lim_{t \rightarrow 0^+} \phi_t * f(x)\) exists almost everywhere.

Let us assume \((3)\). Let \(x\) be such that \(\lim_{t \rightarrow 0^+} \phi_{t} * f(x)\) exists and \(\phi_{R} * f(x) < \infty\). Then there exist a constant \(0 < C_{x, f} < \infty\) and \(t_{x, f}> 0\) such that
\begin{equation}\label{eq14}
\sup_{t < t_{x, f}} \phi_{t} * f(x) < C_{x, f}.
\end{equation}
We can clearly choose \(t_{x, f} < R\). Now, consider \(t_{x, f} < t < R\). Then
\[
\begin{aligned}
\phi_{t}(x-y) &= (4\pi t)^{-\frac{n}{2}} \sum_{k \in \mathbb{Z}^n} e^{-\frac{|x-y+k|^2}{4t}} \leq (4\pi t_{x,f})^{-\frac{n}{2}}\sum_{k \in \mathbb{Z}^n} e^{-\frac{|x-y+k|^2}{4R}\cdot \frac{R}{t}}\\
&\leq (4\pi t_{x,f})^{-\frac{n}{2}}\sum_{k \in \mathbb{Z}^n} e^{-\frac{|x-y+k|^2}{4R}}\leq \left(\frac{R}{t_{x, f}}\right)^{\frac{n}{2}} \phi_{R}(x-y).
\end{aligned}
\]
Therefore,
\begin{equation}\label{eq15}
\sup_{t_{x, f} < t < R} \phi_{t} * f(x) \leq \left(\frac{R}{t_{x, f}}\right)^{\frac{n}{2}} \phi_{R} * f(x) < \infty.
\end{equation}
Equations (\ref{eq14}) and (\ref{eq15}) give \((4)\). Lastly, Proposition \ref{prop1}, together with the similar arguments in its proof \eqref{3}$\implies$\eqref{1}, establishes the implication from \eqref{t4} to \eqref{t1}.
\end{proof}
\subsection{Proof of Theorem \ref{th-r1}}  In this section we shall prove following more general theorem.
\begin{theorem}\label{th-r2}
Let $v$ be a positive weight in $\mathbb{T}^n \times\mathbb{R}^m,$  $\phi^{WG}_t$ be defined as in  \eqref{eq5} and $1 \leq p<\infty.$ Define the local maximal operator by
\[
W^*_Rf(x,y) = \sup_{0<t<R}\left|\phi^{WG}_t\ast f(x,y)\right|
\]
for some $0<R<\frac{1}{2}$. Then the  following statements are equivalent:
\begin{enumerate}
\item\label{r1} $v\in D^{WG}_p$ for some $ 1 \leq p<\infty.$
\item\label{r2} There exists \(0 < R < \frac{1}{2}\) and a weight \(u\) such that the operator \(W^*_Rf\) maps \(L_v^p(\mathbb{T}^n \times\mathbb{R}^m)\) into \(L_u^p(\mathbb{T}^n \times\mathbb{R}^m)\) for \(p > 1,\) and maps \(L_v^1(\mathbb{T}^n \times\mathbb{R}^m)\) into weak \(L_u^1(\mathbb{T}^n \times\mathbb{R}^m)\) when $p=1$.
\item\label{r3} There exists \(0 < R < \frac{1}{2}\) such that the convolution \(\phi^{WG}_{R} * f\) is finite almost everywhere, and the limit \(\lim _{t \rightarrow 0^+} \phi^{WG}_{t} * f(x,y) = f(x,y)\) exists almost everywhere for all \(f \in L_v^p(\mathbb{T}^n \times\mathbb{R}^m)\).
\item\label{r4} There exists \(0 < R < \frac{1}{2}\) such that \(W^*_Rf(x,y)\) is finite almost everywhere for all \(f \in L_v^p(\mathbb{T}^n \times\mathbb{R}^m)\).
\end{enumerate}
\end{theorem}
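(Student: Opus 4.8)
\emph{Overall strategy.} The plan is to establish the cycle of implications $\eqref{r1}\Rightarrow\eqref{r2}\Rightarrow\eqref{r3}\Rightarrow\eqref{r4}\Rightarrow\eqref{r1}$, transcribing the proof of Theorem \ref{th2} to the waveguide setting. The three external ingredients are the sharp heat-kernel bound \eqref{eq-r17} for $\phi^{WG}_t$, the weighted $L^p$-boundedness of the maximal operator $\mathcal M^{WG}$ from Proposition \ref{GP},\eqref{prop-r2}, and the characterization of the class $D^{WG}_p$ from Proposition \ref{prop-r1}.

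\emph{Proof of $\eqref{r1}\Rightarrow\eqref{r2}$.} For $0<t<R$, split $\phi^{WG}_t=\phi^{WG,1}_t+\phi^{WG,2}_t$ with $\phi^{WG,1}_t=\phi^{WG}_t\,\chi_{\{|(x,y)|\le(2(n+m)R)^{1/2}\}}$. On the support of $\phi^{WG,1}_t$ the upper bound in \eqref{eq-r17}, after absorbing $2^n(1+\sqrt{\pi/t})^n$ into $t^{-n/2}$ for $t<R<\frac{1}{2}$, gives $\phi^{WG}_t(x,y)\le C_{n,R}\,t^{-(n+m)/2}e^{-(|x|^2+|y|^2)/4t}$. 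Decomposing the ball into dyadic annuli $r_j\le|(x,y)|<r_{j+1}$ with $r_j=(2(n+m)2^jt)^{1/2}$ and using the radial monotonicity of the Gaussian, one bounds $\sup_{0<t<R}\phi^{WG,1}_t*f^*$ by a geometrically convergent sum of shifted averages, hence by $C\,\mathcal M_{(2(n+m)R)^{1/2}}f^*$, which for $R<\frac{1}{8(n+m)}$ is dominated by $C\,\mathcal M_{1/2}f^*=C\,\mathcal M^{WG}f$. For the outer part, differentiating a single term shows $t^{-(n+m)/2}e^{-(|x+k|^2+|y|^2)/4t}$ is increasing for $t<\frac{|x+k|^2+|y|^2}{2(n+m)}$; since $|x+k|\ge\frac{1}{2}$ on $Q_n$ when $k\ne 0$ and $|x|^2+|y|^2>2(n+m)R$ on the support of $\phi^{WG,2}_t$, every term is increasing on $(0,R)$, whence $\sup_{0<t<R}\phi^{WG,2}_t*f=\phi^{WG,2}_R*f\le\phi^{WG}_R*f$. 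Combining, for $R<\frac{1}{8(n+m)}$,
\[
W^*_Rf(x,y)\lesssim\mathcal M^{WG}f(x,y)+\phi^{WG}_R*f(x,y),
\]
and taking $u=\min\{u_1,u_2\}$, with $u_1$ as in Proposition \ref{prop-r1},\eqref{r6} and $u_2$ as in Proposition \ref{GP},\eqref{prop-r2} (both boundedness properties persist when the target weight is replaced by a smaller one), yields \eqref{r2}; for $p=1$ one uses the weak-type versions throughout.

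\emph{Proof of $\eqref{r2}\Rightarrow\eqref{r3}\Rightarrow\eqref{r4}\Rightarrow\eqref{r1}$.} For $f\in C_c(\mathbb{T}^n\times\mathbb{R}^m)$ one has $\phi^{WG}_t*f\to f$ uniformly: by \eqref{eq5}, $\|\phi^{WG}_t\|_{L^1}=\|\phi_t\|_{L^1(\mathbb{T}^n)}\,\|(4\pi t)^{-m/2}e^{-|\cdot|^2/4t}\|_{L^1(\mathbb{R}^m)}=1$, and $\int_{|(x,y)|\ge\delta}\phi^{WG}_t\to 0$ as $t\to0^+$ by comparison via \eqref{eq-r17} with the Gauss--Weierstrass kernel on $\mathbb{R}^{n+m}$, which is a good kernel. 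Then, exactly as in the torus case, setting $\Phi f=\bigl|\limsup_{t\to0^+}\phi^{WG}_t*f-\liminf_{t\to0^+}\phi^{WG}_t*f\bigr|$, the bound $\Phi f\le 2W^*_Rf$, the vanishing $\Phi f=0$ on $C_c$, the density of $C_c$ in $L^p_v(\mathbb{T}^n\times\mathbb{R}^m)$, and the weak boundedness from \eqref{r2} force $\Phi f=0$ a.e.; hence $\lim_{t\to0^+}\phi^{WG}_t*f$ exists a.e., and it equals $f$ a.e. by extracting an a.e.-convergent approximating subsequence, which is \eqref{r3}. For $\eqref{r3}\Rightarrow\eqref{r4}$: where the limit exists and $\phi^{WG}_R*f(x,y)<\infty$ there is $t_{x,f}<R$ with $\sup_{t<t_{x,f}}\phi^{WG}_t*f(x,y)<\infty$, while for $t_{x,f}<t<R$ the elementary bound $\phi^{WG}_t(x-\cdot,y-\cdot)\le(R/t_{x,f})^{(n+m)/2}\phi^{WG}_R(x-\cdot,y-\cdot)$ gives $\sup_{t_{x,f}<t<R}\phi^{WG}_t*f(x,y)<\infty$, hence $W^*_Rf<\infty$ a.e. Finally $\eqref{r4}\Rightarrow\eqref{r1}$: since $\phi^{WG}_{t_0}*f\le W^*_Rf$ for $t_0<R$, statement \eqref{r4} implies statement \eqref{r7} of Proposition \ref{prop-r1}, and that proposition (together with the duality argument in its proof) gives $v\in D^{WG}_p$, i.e. \eqref{r1}.

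\emph{Main obstacle.} The only genuinely new work is in $\eqref{r1}\Rightarrow\eqref{r2}$, namely calibrating the split $\phi^{WG}_t=\phi^{WG,1}_t+\phi^{WG,2}_t$ so that the mixed geometry is respected: the cutoff radius must be chosen so that the inner piece is dominated by the $(n+m)$-dimensional local maximal operator underlying $\mathcal M^{WG}$ (which needs the sharp bound \eqref{eq-r17} and the dyadic splitting in the joint variable) while the outer piece stays monotone in $t$ — the latter being delicate because the lattice translates in the $x$-direction must be controlled separately from the $k=0$ term, using that $Q_n$ keeps nonzero translates at distance $\ge\frac{1}{2}$. Once the pointwise domination $W^*_Rf\lesssim\mathcal M^{WG}f+\phi^{WG}_R*f$ is secured, the remaining implications are a routine transcription of the proof of Theorem \ref{th2}, with the Euclidean factor handled as a black box through Proposition \ref{GP},\eqref{prop-r2}.
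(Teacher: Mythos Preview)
Your proposal is correct and follows essentially the same approach as the paper: the identical cycle $\eqref{r1}\Rightarrow\eqref{r2}\Rightarrow\eqref{r3}\Rightarrow\eqref{r4}\Rightarrow\eqref{r1}$, the same cutoff $\phi^{WG}_t=\phi^{WG,1}_t+\phi^{WG,2}_t$ at radius $(2(n+m)R)^{1/2}$ with the dyadic-annulus bound on the inner piece and $t$-monotonicity on the outer piece, and the same appeals to Proposition~\ref{GP}\eqref{prop-r2} and Proposition~\ref{prop-r1}. Your treatment of the $k\neq 0$ lattice terms in the monotonicity step (via $|x+k|\ge\frac{1}{2}$ on $Q_n$) is slightly more explicit than the paper's, which simply asserts the increasing property, but the argument is the same.
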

\begin{remark}\label{fr1}
In Theorems \ref{th2} and  \ref{th-r2}, we are not only characterizing the weighted \(L^p\)-spaces on   \(\mathbb{T}^n\) and  \(\mathbb{T}^n \times \mathbb{R}^m\) respectively,  for which the solution of the heat equations converges pointwise to the initial data,  in fact,  we also show that  the  same weighted \(L^p\)-spaces characterize the boundedness of the corresponding maximal operators.
\end{remark}
\begin{remark}\label{fr2} We note the following weight class inclusions for maximal functions
\begin{enumerate}
\item We have $D^T_p= \{ v: v^{-\frac{1}{p}} \in L^{p'}(\mathbb T^n)\}$ for $1\leq p<\infty,$ where $D^T_p$ is weight class for boundedness of $\mathcal{M}^{T}$ (see Theorem \ref{GP},\eqref{prop2}) and $ \{ v: v^{-\frac{1}{p}} \in L^{p'}(\mathbb T^n)\}$ is same for $H^*_R$ (see Theorem \ref{th2}).
\item For waveguide manifold, we have  $D^{WG}_p\subsetneq \{ v: v^{-\frac{1}{p}} \in L^{p'}_{loc}(\mathbb{T}^n \times \mathbb{R}^m)\}.$ This follows from the inequality
\[\chi_K(x)v^{-\frac{1}{p}}(x) \lesssim  \chi_K(x) e^{-\frac{|x|^2}{4t_0}} v^{-\frac{1}{p}}(x) \lesssim  e^{-\frac{|x|^2}{4t_0}} v^{-\frac{1}{p}}(x),\] 
where $K$ is compact and $t_0>0.$
On the other side, \(v(x, y) = e^{-\frac{y^2}{4t_0}p}\) belongs to \(L_{loc}^{p'}(\mathbb{T}^n \times \mathbb{R}^m)\) but not to \(D^{WG}_p\).
\end{enumerate}
\end{remark}

\begin{proof}[{\bf Proof of Theorem \ref{th-r2}}]
Assuming \eqref{r1}, we can choose without loss of generality that \(f \geq 0\). Let \(0 < t < R\). Decompose \(\phi^{WG}_t\) as 
\[\phi^{WG}_t = \phi^{WG}_{t,1} + \phi^{WG}_{t,2},  \quad where \  \phi^{WG}_{t,1}= \phi^{WG}_t \chi_{\{|(x,y)| \leq(2R(n+m))^{\frac{1}{2}}\}}.\]
If \(j_0 \in \mathbb{Z}\) is such that \(2^{j_0}t < R < 2^{j_0+1}t\), then we have
\begin{equation}\label{eq-r18}
\phi^{WG}_{t,1}(x,y)\leq \phi^{WG}_t(x,y)\left(\chi_{\{|(x,y)| \leq r_{0}\}}(x,y)+\sum_{j=0}^{j_0} \chi_{\left\{(r_j \leq |(x,y)| \leq r_{j+1}\right\}}(x,y)\right),
\end{equation}
where we denote \(r_j = (2(n+m)2^{j}t)^{\frac{1}{2}}\) for $0\leq j\leq j_0$. Using the right-hand side inequality of (\ref{eq-r17}), we can bound \(\phi^{WG}_t(x,y)\) as follows
\[
\phi^{WG}_t(x,y) \leq 2^n\left(1+\sqrt{\frac{\pi}{t}}\right)^{n}\left(\frac{1}{4\pi t}\right)^{\frac{m}{2}}e^{-\frac{|(x,y)|^2}{4t}}\leq \frac{C_{n,m}}{t^{\frac{n+m}{2}}} e^{-\frac{|(x,y)|^2}{4t}},
\]
where \( C_{n,m} = 2^n \left(\sqrt{R} + \sqrt{\pi}\right)^n (\sqrt{4\pi})^{-m}\). Hence, (\ref{eq-r18}) can be rewritten as
\[\phi^{WG}_{t,1}(x,y)\leq\frac{C_{n,m}}{t^{\frac{n+m}{2}}}e^{-\frac{|(x,y)|^2}{4t}}\left(\chi_{\{|(x,y)| \leq r_{0}\}}(x,y)+\sum_{j=0}^{j_0} \chi_{\left\{r_j \leq |(x,y)| \leq r_{j+1}\right\}}(x,y)\right).\]
Using the fact that \( e^{-\frac{|(x,y)|^2}{4t}} \) is decreasing for fixed $t$, we can rearrange it as
\[\phi^{WG}_{t,1}(x,y)\leq C_{n,m}\left(C_0\frac{1}{r_{0}^{n+m}} \chi_{\{B(0,r_{0})\}}(x,y)+\sum_{j=0}^{j_0} C_{j+1}\frac{1}{r_{j+1}^{n+m}} \chi_{\left\{B(0,r_{j+1})\right\}}(x,y)\right),\]
where we denote $C_0=(2(n+m))^{\frac{n+m}{2}}$ and $C_{j+1}=\left(2(n+m)2^{j+1}\right)^{\frac{n+m}{2}} e^{-\frac{n+m}{2} 2^{j}}$ for $0\leq j\leq j_0.$ Similarly, in \eqref{eq3}, we get
\[\sup _{t<R} \phi^{WG}_{t,1} * f^*(x,y)\leq C_{n,m}\left(C_0 \mathcal{M}_{R_{n,m}^{1 / 2}} f^*(x,y)+\sum_{j=0}^{j_0}C_{j+1}\,\mathcal{M}_{R_{n,m}^{1 / 2}} f^*(x,y)\right),\]
where $R_{n,m}=2R(n+m).$ Hence, we get
\[
\sup _{t<R}  \phi^{WG}_{t,1} * f^*(x,y) \leq C'_{n,m} \,\mathcal{M}_{R_{n,m}^{1 / 2}} f^*(x,y),
\]
where 
\[ C'_{n,m} = C_{n,m}\left(C_0+\sum_{j=0}^{j_0}C_{j+1}\right) < \infty.\]
Therefore, by choosing a suatable $R'$, we obtain
\begin{equation}\label{eq-r19}
\sup _{t<R'} \phi^{WG}_{t,1} * f^*(x,y) \lesssim \mathcal{M}_{\frac{1}{2}} f^*(x,y).
\end{equation}
For the case of $\phi^{WG}_{t,2}$, we can easily get that  \(\phi^{WG}_{t,2}(x,y)\) is increasing in the time interval \((0, R'')\)  for some $R''>0$ then, we have
\begin{equation}\label{eq-r20}
	\sup _{0<t<R''} \phi^{WG}_{t,2} * f(x,y) = \phi^{WG}_{R'',2}* f(x,y) \leq \phi^{WG}_{R''} * f(x,y).
\end{equation}
Hence, by selecting \( R = \min\{R', R''\} \), from equations (\ref{eq-r19}) and (\ref{eq-r20}), we obtain
\begin{equation}\label{eq13}
	W_{R}^{*} f(x,y) \lesssim\left(\mathcal{M}^{WG} f(x,y)+\phi^{WG}_{R} * f(x,y)\right).
\end{equation}
The condition (\ref{r2}) follows by applying Proposition \ref{prop-r1} and Proposition \ref{GP},\ref{prop-r2}.
\end{proof}
Similar to how we proved that (\ref{t2}) leads to (\ref{t3}) in Theorem \ref{th2}, we use the fact that continuous functions with compact support are dense in \(L^{p}_v(\mathbb{T}^n \times\mathbb{R}^m)\) and \(L^p_v(\mathbb{T}^n \times\mathbb{R}^m)\) to weak \(L^p_u(\mathbb{T}^n \times\mathbb{R}^m)\) boundedness of \(W^*_R\) leads us from (\ref{r2}) to (\ref{r3}).

Let us assume (\ref{r3}). Let \((x,y)\) be such that \(\lim_{t \rightarrow 0^+} \phi^{WG}_{t} * f(x,y)\) exists and \(\phi^{WG}_{R} * f(x,y) < \infty\). Then there exist constants \(0 < C_{x, y} < \infty\) and \(t_{x, y}> 0\) such that
\begin{equation}\label{eq-r21}
	\sup_{t < t_{x, y}} \phi^{WG}_{t} * f(x,y) < C_{x, y}.
\end{equation}
We can clearly choose \(t_{x, y} < R\). Now, consider \(t_{x, y} < t < R\). Then, we can get
\[
\phi^{WG}_{t}((x,y)-(x',y')) \leq \left(\frac{R}{t_{x, y}}\right)^{\frac{n+m}{2}} \phi^{WG}_{R}((x,y)-(x',y')).
\]
Therefore,
\begin{equation}\label{eq-r22}
	\sup_{t_{x, y} < t < R} \phi^{WG}_{t} * f(x,y) \leq \left(\frac{R}{t_{x, y}}\right)^{\frac{n+m}{2}} \phi^{WG}_{R} * f(x,y) < \infty.
\end{equation}
Equations (\ref{eq-r21}) and (\ref{eq-r22}) give (\ref{r4}). Finally, from Proposition \ref{prop-r1} we can get the implication (\ref{r4}) implies (\ref{r1}).

\bigskip

\noindent{\bf Acknowledgements:} The second author is thankful to DST-INSPIRE (DST/ INSPIRE/04/2016/001507) for the financial support provided by D.G.B.. The second author would also like to gratefully acknowledge the support provided by IISER Pune, Government of India.

\bigskip

\bibliographystyle{plain}
\bibliography{version_6.bib}

\end{document}